\documentclass{amsart}
\usepackage{latexsym, bbm, html, enumerate, amssymb, amsmath}
\usepackage[abbr,dcucite]{harvard}
\usepackage[dvips]{graphicx}
\usepackage{setspace}


\usepackage[show]{ed}

\usepackage{tikz}


\theoremstyle{plain}
\newtheorem{theorem}{Theorem}

\newtheorem{lemma}[theorem]{Lemma}
\newtheorem{proposition}[theorem]{Proposition}
\theoremstyle{definition}


\allowdisplaybreaks[1]



\renewcommand{\P}{\mathbb{P}}

\newcommand{\E}{\mathbb{E}}
\newcommand{\FF}{\mathcal{F}}



\newcommand{\x}{\boldsymbol{x}}

\newcommand{\1}{\mathbbm{1}}

\DeclareMathOperator{\Var}{Var}

\setlength{\marginparwidth}{0.75in}
\let\oldmarginpar\marginpar
\renewcommand\marginpar[1]{\-\oldmarginpar[\raggedleft\footnotesize #1]%
{\raggedright\footnotesize #1}}

\begin{document}

\title[On-Line Alternating Subsequences]%
{On-Line Selection of Alternating Subsequences from a Random Sample}
\author[Arlotto, A., Chen, R.W., Shepp, L.A., Steele, J. M.]
{Alessandro Arlotto, Robert W. Chen,\\ Lawrence A. Shepp and J. Michael Steele}

\thanks{A. Arlotto:  Wharton School, Department of Operations and Information Management, Huntsman Hall
527.2, University of Pennsylvania, Philadelphia, PA 19104}

\thanks{R.W. Chen: Department of Mathematics, University of Miami, Coral Gables, FL 33124}

\thanks{L.A. Shepp:  Wharton School, Department of Statistics, Huntsman Hall
462, University of Pennsylvania, Philadelphia, PA 19104}

\thanks{J.M.
Steele:  Wharton School, Department of Statistics, Huntsman Hall
447, University of Pennsylvania, Philadelphia, PA 19104}

\begin{abstract}
We consider sequential selection of an alternating subsequence from a sequence of independent, identically distributed, continuous random variables,
and we determine the exact asymptotic behavior of
an optimal sequentially selected subsequence. Moreover, we find (in a sense we make precise)
that a person who is constrained to make sequential selections does only about 12\% worse than a
person who can make selections with full knowledge of the random sequence.

{\sc Key Words}: Bellman equation, on-line selection, sequential selection, prophet inequality, alternating subsequence

{\sc Mathematics Subject Classification (2000)}: Primary: 60C05, 90C40; Secondary: 90C27, 90C39

\end{abstract}


\maketitle


\section{Introduction}

Given a finite (or infinite)
sequence  $\x = \{x_1,x_2,...,x_n, \ldots\}$ of real numbers, we say that a subsequence
$x_{i_1}, x_{i_2}, \ldots, x_{i_k}, \ldots$ with
$1\leq i_1 < i_2 < \ldots < i_k < \cdots $
is \emph{alternating} if we have
$
x_{i_1} < x_{i_2}> x_{i_3} <  x_{i_4 }\cdots.
$
When $\x$ is an element of the set of permutations $\mathcal{S}_n$ of the  integers $\{1,2,\ldots, n\}$, the study of the set of
alternating permutations goes back to Euler \citeaffixed{Sta:CM2010}{c.f.}.

Here we are mainly concerned with the length $a(\x)$ of the longest alternating subsequence of $\x$. This function has
been more recently studied by \citeasnoun{Wid:EJC2006}, Pemantle \citeaffixed[p.
568]{Sta:PROC2007}{c.f.} and \citeasnoun{Sta:MMJ2008}.
In particular, they consider the
situation in which $\x$ is chosen at random from $\mathcal{S}_n$. By exploiting explicit
formulas for generating functions and delicate applications of the saddle point method, they were able to obtain exact
formulas for the first two moments and to prove a central limit theorem. Specifically, if
$\x$ is chosen according to the uniform distribution on the set of permutations  $\mathcal{S}_n$ and if
$A_n:= a(\x)$ denotes the length of the longest alternating subsequence of $\x$, then for $n\geq 4$ one has
\begin{eqnarray*}
\E[A_n]  = \frac{2n}{3} + \frac{1}{6} \quad \text{ and }
\Var[A_n]  = \frac{8n}{45} - \frac{13}{180}.
\end{eqnarray*}

More recently, \citeasnoun{HouRes:EJC2010} used purely probabilistic means to obtain a simpler proof of this result and the corresponding
central limit theorem. Moreover,
the methods of \citename{HouRes:EJC2010} also apply to  models of random words that are more refined than simple random selection from set of permutations.

Here, we study the problem of making \emph{on-line selection} of an alternating subsequence. That is, we now regard the
sequence $x_1, x_2, ...$ as being presented to us sequentially, and, at the time $i$ when  $x_i$ is presented, we
must choose to include $x_i$ as a term of our subsequence ---
or we must reject $x_i$ as a member of the subsequence.

We will consider the sequence to be given
by independent random variables $X_1,X_2,\ldots $ that have a common continuous distribution $F$, and, since we are only concerned with order
properties, one can without loss of generality take the distribution to be uniform on $[0,1]$.
We now need to be more explicit about the set $\Pi$ of feasible strategies  for on-line selection.
At time $i$, when presented with $X_i$ we must decide to select $X_i$ based on its value,
the value of earlier members of the sequence, and the actions we have taken in the past. All of this information
can be captured by saying that
$\tau_k$, the index of the $k$'th selection, must be a stopping time with respect to the increasing
sequence of $\sigma$-fields,
$
\FF_i = \sigma\{X_1, X_2, \ldots, X_i\}, \, i=1,2, \ldots.
$
Given any feasible policy $\pi \in \Pi$ the random variable of most interest here is
$A^o_n(\pi)$, the number of selections made by the policy $\pi$ up to and including time $n$.
In other words, $A^o_n(\pi)$ is equal to the largest $k$ for which there are stopping times
$1\leq \tau_1 < \tau_2 < \cdots < \tau_k \leq n$ such that
$\{X_{\tau_1}, X_{\tau_2}, \ldots, X_{\tau_k} \}$ is an alternating sequence.

\begin{theorem}[Asymptotic Selection Rate for Large Samples]\label{th1:fixed-mean}
For each $n=1,2,...$, there is a policy $\pi^*_n\in \Pi$ such that
\begin{equation*}
\E[A^o_n(\pi^*_n)] = \sup_{\pi\in \Pi}\E[A^o_n(\pi)],
\end{equation*}
and for such an optimal policy one has for all $n\geq 1$ that
\begin{equation*}
 (2 - \sqrt{2}) n \leq   \E[A^o_n(\pi^*_n)]  \leq (2 - \sqrt{2}) n + C,
\end{equation*}
where $C$ is a constant with $C < 11 - 4\sqrt{2} \sim 5.343.$ In particular, one has
\begin{equation*}
   \E[A^o_n(\pi^*_n)]  \sim  (2 - \sqrt{2}) n \quad \hbox{as $n\rightarrow \infty$}.
\end{equation*}
\end{theorem}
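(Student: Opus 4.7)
The plan is to combine standard dynamic programming with a potential-function induction. Reducing to $X_i \sim U[0,1]$ and exploiting the $x \leftrightarrow 1-x$ symmetry (which exchanges up- and down-alternations), define $V_n(x)$ to be the optimal expected number of further selections given $n$ samples remain, the last selected value is $x$, and the next selection must be larger. Conditioning on the next observation yields the one-sided Bellman recursion
\begin{equation*}
V_n(x) = V_{n-1}(x) + \int_x^1 \bigl[1 + V_{n-1}(1-y) - V_{n-1}(x)\bigr]_+ \, dy, \qquad V_0 \equiv 0,
\end{equation*}
and standard dynamic programming gives an optimal policy $\pi^*_n$ satisfying $\E[A_n^o(\pi^*_n)] = V_n(0)$.

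The key step is to construct two potential functions $g_-, g_+ : [0,1) \to \R$ with $g_-(x) \leq 0 \leq g_+(x)$, $g_-(0) = 0$, $g_+(0) < 11 - 4\sqrt{2}$, and satisfying the one-step inequalities
\begin{equation*}
\int_x^1 \bigl[1 + g_+(1-y) - g_+(x)\bigr]_+ dy \;\leq\; 2 - \sqrt{2} \;\leq\; \int_x^1 \bigl[1 + g_-(1-y) - g_-(x)\bigr]_+ dy
\end{equation*}
for every $x \in [0, 1)$. A routine induction on $n$ using the Bellman recursion and the initial condition $V_0 \equiv 0$ then propagates the sandwich $(2-\sqrt{2})n + g_-(x) \leq V_n(x) \leq (2-\sqrt{2})n + g_+(x)$; specializing to $x = 0$ delivers both bounds of the theorem.

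The hard part is the explicit construction of $g_\pm$, and in particular identifying that the correct balance constant is $c = 2-\sqrt{2}$. Heuristically the extremal potential saturates the functional equation $\int_x^1 [1 + g(1-y) - g(x)]_+ \, dy = c$ for every $x$; differentiating this in $x$ yields an ODE whose compatibility condition forces $c$ to be the positive root of $(c-2)^2 = 2$, i.e., $c = 2 - \sqrt{2}$. Both potentials must blow up as $x \to 1$ (consistent with $V_n(1) = 0$), while remaining bounded at $x = 0$; their explicit form likely involves logarithmic or $1/(1-x)$-type singularities, and this is where the main calculation lies.

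For the lower bound one may alternatively exhibit a specific suboptimal stationary threshold policy $\hat\pi$, calibrated so that the induced Markov chain on the ``room'' variable $R_k = 1 - X_k$ (or $X_k$ in down-state) has the fixed-point recursion $R = 1 - R/\sqrt{2}$, whose positive solution is $R^* = \sqrt{2}/(\sqrt{2}+1) = 2 - \sqrt{2}$; a renewal-reward analysis then produces $\E[A_n^o(\hat\pi)] \geq (2-\sqrt{2})n$. Either way, the main obstacle is the explicit calibration, after which careful bookkeeping of the $O(1)$ boundary contributions (the behavior of $g_\pm$ near $x = 1$ and the small-$n$ transient) yields the explicit constant $C < 11 - 4\sqrt{2}$, and the asymptotic $\E[A_n^o(\pi^*_n)] \sim (2-\sqrt{2}) n$ follows by squeezing the two bounds.
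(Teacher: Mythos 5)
Your reduction to the recursion $V_n(x)=V_{n-1}(x)+\int_x^1\left[1+V_{n-1}(1-y)-V_{n-1}(x)\right]_+dy$ is correct (it is the paper's equation \eqref{eq:Bellman-FINITE-flipped} after the reflection identity of Lemma \ref{lm:symmetryFS}), and your sandwich scheme is sound in principle: the Bellman operator is monotone, so a $g_+$ satisfying your one-step inequality does propagate the upper bound, and likewise for $g_-$. Your alternative route to the lower bound is essentially the paper's argument (threshold $\xi=1-2^{-1/2}$, chain started in its uniform stationary law on $[0,1-\xi]$, plus monotonicity of $v_{1,n}$ to pass from the stationary start to $y=0$); note that to get $(2-\sqrt2)\,n\le\E[A^o_n(\pi^*_n)]$ for \emph{every} $n$, rather than only asymptotically, you must start the chain in stationarity and invoke that monotonicity, which your phrase ``renewal-reward analysis'' leaves implicit.

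The genuine gap is the upper bound: the potentials $g_\pm$ are never constructed, and you explicitly defer ``the main calculation.'' That calculation is the entire content of the hard half of the theorem. The function $g_+$ must be nonnegative, satisfy $\int_0^{1-x}\left[1+g_+(z)-g_+(x)\right]_+dz\le 2-\sqrt2$ for \emph{every} $x\in[0,1)$, and have $g_+(0)<11-4\sqrt2$. The natural candidate, the vanishing-discount bias (which from the paper's formula for $V'(y)$ behaves like $-(2-\sqrt2)/(1-y)$ as $y\to1$, reflecting that $V_n(1)=0$ breaks the linear growth), is unbounded below; making it nonnegative requires truncation and shifting, and verifying the one-step inequality through the truncation region while controlling $g_+(0)$ is exactly the work you have not done. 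Nor is there any reason this route would reproduce the constant $11-4\sqrt2$, which in the paper arises from bookkeeping in a completely different argument: the paper never solves or bounds an average-cost optimality equation. Instead it solves the geometric-horizon problem exactly (Theorem~\ref{th2:geometric-mean}), concatenates copies of the finite-horizon optimal policy into a suboptimal infinite-horizon policy via regeneration times, and lets $\rho\uparrow1$; this is why it needs the structural Proposition \ref{pr:f-lower-bound} (thresholds bounded below by $1/6$ and value-function oscillation bounds) to control the $O(1)$ loss at each regeneration. Your outline identifies the correct rate $2-\sqrt2$ and a viable framework, but as written it is a plan, not a proof.
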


The proof of this result exploits the analysis of a closely related
selection problem in which one considers a sample of size $N$ where $N$ is geometrically distributed
with parameter $0 < \rho < 1$ (so one has
$
\P( N = k ) = \rho^{k-1}(1 - \rho), \, k = 1,2,3,\ldots.
$)
Here we also assume that $N$ is independent of the sequence  $X_1,X_2,\ldots $.

Parallel to our first theorem, we consider the number $A^o_N(\pi)$ of selections made by a feasible policy $\pi$ up to and
including the random time $N$.
The geometric smoothing provided by $N$ gives us a useful ``shift symmetry" that is missing in the fixed $n$ problem, and
the analysis of a geometric sample turns out to be far more tractable. In particular,
one can determine the \emph{exact} expected length of the sequence selected by an optimal policy.

\begin{theorem}[Expected Selection Size in Geometric Samples]\label{th2:geometric-mean}
For each $0< \rho < 1$, there is a $\pi^*\in \Pi$, such that
\begin{equation*}
\E[A^o_N(\pi^*)] = \sup_{\pi\in \Pi}\E[A^o_N(\pi)],
\end{equation*}
and for such an optimal policy one has
\begin{equation*}
   \E[A^o_N(\pi^*)]  =\frac{3 - 2\sqrt{2} - \rho + \rho\sqrt{2}}{\rho (1 - \rho)}
   \sim  (2 - \sqrt{2}) (1 - \rho)^{-1} \quad \hbox{as $\rho\rightarrow 1$}.
\end{equation*}
\end{theorem}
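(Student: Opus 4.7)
The plan is to exploit the memorylessness of the geometric sample size $N$ to reformulate the problem as a stationary infinite-horizon Markov decision problem whose value functions depend only on the current ``state'' rather than on how many of the $X_i$'s have been observed so far. I introduce $v_0$, the maximum expected number of future selections starting with no prior selection; $f(y)$, the analogous value from the ``valley'' state (last selection was $y$, next must be larger), conditional on the game continuing to the current step; and $g(y)$, the companion ``peak'' value. The substitution $X_i \mapsto 1-X_i$ gives the symmetry $g(y) = f(1-y)$, and the Bellman equations read
\begin{equation*}
f(y) = \rho y f(y) + \int_y^1 \max\bigl\{1 + \rho f(1-x),\ \rho f(y)\bigr\}\, dx, \qquad v_0 = \int_0^1 \max\bigl\{1 + \rho f(x),\ \rho v_0\bigr\}\, dx.
\end{equation*}
Substituting $u = 1 - x$ in the equation for $f(0)$ obtained from the first Bellman at $y = 0$ shows that both $v_0$ and $F := f(0)$ are fixed points of the same scalar equation $c = \int_0^1 \max\{1 + \rho f(x),\ \rho c\}\, dx$; since $c(1-\rho) - \int_0^1 (1 + \rho f(x) - \rho c)_+\, dx$ is strictly increasing in $c$, this fixed point is unique and hence $v_0 = F$.

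Next I would argue that the optimal policy in valley state at $y$ is threshold-type: accept $X$ iff $X \geq h(y)$, where $h(y) \geq y$ is determined by the indifference condition $1 + \rho f(1 - h(y)) = \rho f(y)$ whenever the threshold is interior. Differentiating the Bellman equation with respect to $y$ and substituting this indifference identity cleanly eliminates the cross-terms and yields $(1 - \rho h(y))\, f'(y) = 0$, so $f$ must be constant on every interval where the threshold is interior. Combined with monotonicity and the symmetry $g(y) = f(1-y)$, this forces a two-region structure: there is a threshold $h^* \in [0, 1/2]$ such that $f(y) = F$ on $[0, h^*]$, while on $(h^*, 1]$ the optimal policy reduces to ``accept any $X > y$'' and $f$ is strictly decreasing. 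On the top sub-region $[1-h^*, 1]$ every accepted $x$ satisfies $1 - x \leq h^*$, so $f(1-x) = F$, and the Bellman equation collapses to the explicit formula $f(y) = (1-y)(1 + \rho F)/(1-\rho y)$. The indifference relation at $y = h^*$, namely $f(1-h^*) = F - 1/\rho$, combined with this explicit formula, yields the first relation
\begin{equation*}
F = \frac{1 - \rho + 2\rho h^*}{\rho(1-\rho)}.
\end{equation*}

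The main obstacle is determining $h^*$ itself; the difficulty is that the function $f$ on the middle sub-region $(h^*, 1-h^*)$ satisfies a delay-differential equation with no elementary closed-form solution. The key move is to avoid solving for $f$ pointwise: integrating the Bellman equation on $(h^*, 1-h^*)$ and switching the order of integration in the resulting double integral $\iint f(u)\, du\, dy$, the quantity $I := \int_{h^*}^{1-h^*} f(u)\, du$ can be solved for in closed form in terms of $h^*$ and $F$ alone. Substituting this expression for $I$ back into the Bellman identity at $y = h^*$ (via $\int_0^{1-h^*} f = F h^* + I$) produces a polynomial equation in $h^*$, which, after combining with the formula for $F$ above and simplifying, reduces to the quadratic $2\rho^2 (h^*)^2 - 4\rho h^* - (\rho^2 - 4\rho + 2) = 0$. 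Its only admissible root is $h^* = (\rho - 2 + \sqrt{2})/(\rho \sqrt{2})$ for $\rho \geq 2 - \sqrt{2}$ (the complementary range corresponds to the degenerate case $h^* = 0$, handled by a direct computation). Substituting this $h^*$ into the expression for $F$ and simplifying gives $v_0 = F = (3 - 2\sqrt{2} - \rho + \rho\sqrt{2})/(\rho(1-\rho))$, and the asymptotic $v_0 \sim (2-\sqrt{2})(1-\rho)^{-1}$ as $\rho \to 1$ then follows immediately from the leading term of the numerator.
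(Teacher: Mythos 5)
Your proposal is correct in substance and reaches the right answer, but the computational core is genuinely different from the paper's. You share the same skeleton --- the memoryless reduction to a stationary Bellman equation, the reflection symmetry $g(y)=f(1-y)$, the two-region threshold structure $f^*(y)=\max\{\xi_0,y\}$ with $f$ constant on $[0,\xi_0]$, and the explicit formula $f(y)=(1-y)(1+\rho F)/(1-\rho y)$ on the top segment (the paper's condition (i) of Lemma \ref{lm:4conditions}). Where you diverge is in how the critical value is pinned down. The paper parametrizes by an arbitrary threshold $\xi$, derives a second-order ODE for the value function on the middle interval, integrates it in closed form to get $V'(y)h(y)-(1-\rho y)^2=C$, assembles a $4\times 4$ linear system in $V(\xi),V(1-\xi),V'(\xi),V'(1-\xi)$, and then imposes the first-order condition $V'(\xi_0)=0$. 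You instead use the indifference identity $\rho f(\xi_0)=1+\rho f(1-\xi_0)$ at the critical point (which yields $F=(1-\rho+2\rho h^*)/(\rho(1-\rho))$ --- I checked this is exactly consistent with the paper's \eqref{eq:xi0}) together with a Fubini integration of the Bellman equation over the middle interval. Your ``lucky'' cancellation is that $(1-\rho y)-\rho(1-y-h^*)=1-\rho+\rho h^*$ is independent of $y$, so $I=\int_{h^*}^{1-h^*}f$ comes out in closed form without ever solving for $f$ pointwise; the paper's luck is instead that $h'(y)=-\rho(1-\rho y)(1-2\rho+3\rho y)$ matches the coefficient in its ODE. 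Both devices are special to this problem, but yours avoids differentiation of the value function entirely and is in that sense more elementary. I verified numerically (e.g.\ at $\rho=0.9$ and $\rho=0.8$) that your two relations hold at the paper's optimum and that they reduce to your quadratic $2\rho^2(h^*)^2-4\rho h^*-(\rho^2-4\rho+2)=0$, whose admissible root is the paper's $\xi_0$.

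Two smaller remarks. First, your derivation of ``$(1-\rho h(y))f'(y)=0$'' presumes differentiability of the threshold map $h$; the paper's Proposition \ref{StructureProp} gets the same two-region structure more robustly from monotonicity and continuity of $v$ alone, and you would want that argument to make the structure step rigorous. Second, your case split at $\rho=2-\sqrt{2}$ is genuinely needed: for $\rho<2-\sqrt{2}$ the optimal threshold is $\xi_0=0$ and the ``direct computation'' gives $(2-\rho)/(2(1-\rho))$, which does \emph{not} agree with the displayed closed form (which diverges as $\rho\to 0$ while $\E[A^o_N(\pi^*)]\le \E N\to 1$). This is a defect of the theorem statement as written rather than of your argument --- the paper's own proof silently yields a $\xi_0<0$ in that regime --- and it is harmless for the $\rho\uparrow 1$ asymptotics that drive Theorem \ref{th1:fixed-mean}, but you should not claim the displayed formula in the degenerate range.
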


These theorems respectively tell us that optimal on-line selection yields subsequences that grow at a linear rate
$(2-\sqrt{2})n \sim 0.585\, n $ or $(2-\sqrt{2}) \E N \sim 0.585 \E N$.
This is about a 12\% discount off the rate $(2/3)n \sim 0.667 \, n$ that one would obtain with \emph{a priori}
knowledge of the full finite sample $\{X_1,X_2, ..., X_n\}$,
and this discount seem quite modest given the great difference in the knowledge that one has.

To build some intuition about these rates, one should also
consider the ``maximally timid strategy" where one chooses the first observation that falls in $[0, \, 0.5]$, then one chooses the next
observation that
falls in $[0.5, \, 1]$, and then
the next that falls in  $[0, \, 0.5]$, and so on. This strategy obviously leads to an asymptotic selection rate of $ 0.5 \, n$. Finally,
one should also consider the ``purely
greedy strategy" where one accepts any new arrival that is feasible given the previous selections. Curiously enough, by a reasonably quick
Markov chain calculation one can show that the greedy strategy leads to the same selection rate $ 0.5 \, n$
that one finds for the ``maximally timid strategy."

We begin by proving Theorem \ref{th2:geometric-mean} which will give us an exact formula for the expected number of selections made
under the optimal policy for geometric samples. This result will
then be used to prove the upper and lower bounds of Theorem \ref{th1:fixed-mean}.

\section{Proof of Theorem \ref{th2:geometric-mean}}\label{se:proof-thm-geometric-sample}

We now let $S_i$ denote the value of the last member of the subsequence
selected up to and including time $i$. To keep track of the up-down nature of our selections,
we then set $R_i=0$ if $S_i$ is a local minimum of
$\{S_0,S_1,\ldots,S_i\}$ and set $R_i=1$ if $S_i$ is a local
maximum. To initialize our process, we set $S_0 = 1$ and $R_0 = 1$.

Next, we make the class $\Pi$ of feasible policies more explicit.
For each $1 \leq i < \infty$
and for each pair $(S_{i-1}, R_{i-1})$ a feasible strategy  $\pi$ specifies a set $\Delta_{i}(S_{i-1}, R_{i-1})$
such that
$$
\Delta_{i}(S_{i-1}, 0) \subseteq [S_{i-1}, 1]
\quad
\text{ and }
\quad
\Delta_{i}(S_{i-1}, 1) \subseteq [0, S_{i-1}],
$$
and $X_i$  is selected for our subsequence if and only if
$
X_i \in \Delta_{i}(S_{i-1}, R_{i-1}).
$
For each $\pi \in \Pi$, we have the basic relation
\begin{equation*}
A^o_N(\pi) = \sum_{i=1}^N \1(X_i \in \Delta_{i}(S_{i-1}, R_{i-1}))
           = \sum_{i=1}^\infty \1(X_i \in \Delta_{i}(S_{i-1}, R_{i-1}))\1(i \leq N),
\end{equation*}
and by taking expectations on both sides we have
\begin{equation*}
\E[A^o_N(\pi)] = \E\left[ \sum_{i=1}^\infty \rho^{i-1} \1(X_i \in \Delta_{i}(S_{i-1}, R_{i-1}))\right].
\end{equation*}
We came to this relation by considering random sample sizes with the geometric distribution,
but the right side of this identity can also be
interpreted as the infinite-horizon discounted expected length
of the alternating subsequence selected by $\pi$. We are interested
in the policy $\pi^* \in \Pi$ such that
$$
\E[A^o_N(\pi^*)] = \sup_{\pi \in \Pi}
    \E\left[ \sum_{i=1}^\infty \rho^{i-1} \1(X_i \in \Delta_{i}(S_{i-1}, R_{i-1}))\right],
$$
and from the general theory of Markov decision problems,
we know that an optimal policy can be characterized as the solution of an associated Bellman equation.

\subsection*{First Bellman Equation}
For any $i$ such that $S_{i-1} = s$ and $R_{i-1}=r$ we let $v(s,r)$ denote the expected number of
selections made after time $i$ by an optimal policy. By the lack of memory property of the geometric distribution
and by the usual considerations of dynamic programming
one can now check that $v(s,r)$ satisfies Bellman equation:
\begin{equation}\label{eq:Bellman}
v(s,r)=
\begin{cases}
\, \rho s v(s,0) +\int_{s}^1 \max\left\{\rho v(s,0),1+ \rho v(x,1)\right\}\,dx      & \hbox{if $r=0$} \\
\, \rho (1-s)v(s,1) +\int_0^{s} \max\left\{ \rho v(s,1),1+ \rho v(x,0)\right\}\,dx  & \hbox{if $r=1$.}
\end{cases}
\end{equation}
To see why this equation holds, first consider the case when $r=0$ (so the next selection is to be a local maximum).
With probability $\rho$ we get to see another  observation $X_{i+1}$ and with probability $s$ the value we observe is less than the
previously selected value. In this case, we do not have the opportunity to make a selection,
and this observation contributes the term $\rho s v(s,0)$ to our equation.

Next, consider case when $s <X_{i+1} \leq 1$. Now one can choose to select $X_{i+1}=x$ or not.
If we do not select $X_{i+1}=x$  the expected number of subsequent selections is $\rho v(s,0)$ and if we do select  $X_{i+1}=x$
we increment sequence  by $1$ and the expected number of subsequence selections that are made by an optimal
policy in the future given by $\rho v(x,1)$.
Since $X_{i+1}$ is uniformly distributed in $[s,1]$ the expected optimal contribution
is given by the second term of our Bellman equation (top line).
The proof of the second line of the Bellman equation is completely
analogous.

Finally, given a solution $v(s,r)$ to the Bellman equation \eqref{eq:Bellman}, we have
$$
v(1,1) = \E[A^o_N(\pi^*)],
$$
so, now our goal is to determine $v(1,1)$. To do this it will be useful to reorganize the Bellman equation \eqref{eq:Bellman} in a tidier form.
This is possible since the solution $v(s,r)$ of the Bellman equation  has a useful symmetry property.

\begin{lemma}[Reflection Identity]\label{lm:symmetry}
For all $s\in [0,1]$ the  solution $v(s,r)$ of the Bellman equation \eqref{eq:Bellman} satisfies
\begin{equation}\label{eq:symmetry}
v(s,0) = v(1-s, 1).
\end{equation}
\end{lemma}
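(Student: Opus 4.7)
The plan is to exploit the symmetry of the uniform distribution on $[0,1]$ under the reflection $x \mapsto 1-x$, which interchanges ``looking for a local maximum above level $s$'' with ``looking for a local minimum below level $1-s$.'' Concretely, I define $\tilde v(s, r) := v(1-s, 1-r)$ and show that $\tilde v$ is also a bounded solution to the Bellman equation \eqref{eq:Bellman}; uniqueness then forces $v = \tilde v$, which is precisely the reflection identity.

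To verify that $\tilde v$ satisfies \eqref{eq:Bellman}, consider the case $r=0$. Evaluating the second line of \eqref{eq:Bellman} at the point $1-s$ gives
\begin{equation*}
v(1-s, 1) = \rho s \, v(1-s, 1) + \int_0^{1-s} \max\left\{\rho v(1-s, 1),\, 1 + \rho v(x, 0)\right\}\, dx.
\end{equation*}
The substitution $y = 1-x$ rewrites the integral as $\int_s^1 \max\{\rho v(1-s,1),\, 1 + \rho v(1-y, 0)\}\, dy$, and then translating to $\tilde v$ yields
\begin{equation*}
\tilde v(s, 0) = \rho s \, \tilde v(s, 0) + \int_s^1 \max\left\{\rho \tilde v(s, 0),\, 1 + \rho \tilde v(y, 1)\right\}\, dy,
\end{equation*}
which is exactly the first line of \eqref{eq:Bellman} for $\tilde v(s,0)$. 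The analogous substitution in the first line of \eqref{eq:Bellman} verifies the $r=1$ case.

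The second ingredient is uniqueness. The map $T$ defined by the right-hand side of \eqref{eq:Bellman} acts on bounded functions $[0,1]\times\{0,1\}\to \R$, and the elementary inequality $|\max(a,b) - \max(c,d)| \le \max(|a-c|,|b-d|)$ together with $\rho s + \rho(1-s) = \rho$ shows that $T$ is a $\rho$-contraction in the sup norm. Both $v$ and $\tilde v$ are bounded (by the trivial estimate $\E[A_N^o(\pi)] \le \E[N] = 1/(1-\rho)$) and both are fixed points of $T$, so the Banach fixed point theorem gives $v = \tilde v$. The main obstacle is really just the change-of-variables bookkeeping in the first step; the contraction step is routine. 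Alternatively, a purely probabilistic argument works: since $(1-X_i)$ is again i.i.d.\ uniform, the reflection induces a measure-preserving bijection between feasible policies starting in state $(s,0)$ and those starting in $(1-s,1)$, so their supremal expected selection sizes agree.
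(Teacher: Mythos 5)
Your proof is correct. The core computation --- substituting $y = 1-x$ in the integral so that the second line of \eqref{eq:Bellman} evaluated at $1-s$ becomes the first line evaluated at $s$ --- is exactly the computation the paper performs, but you package it differently. The paper propagates the symmetry through the value-iteration sequence: starting from $v^0 \equiv 0$ it shows by induction that every iterate satisfies $v^n(s,0) = v^n(1-s,1)$, then passes to the limit (citing Bertsekas--Shreve for convergence of the iteration). You instead work directly with the fixed point: you verify that the reflected function $\tilde v(s,r) = v(1-s,1-r)$ is itself a solution of \eqref{eq:Bellman} and invoke uniqueness, which you justify by checking that the Bellman operator is a $\rho$-contraction in the sup norm (the estimate $\rho s + \rho(1-s) = \rho$ is the right bookkeeping, and boundedness of $v$ by $\E[N] = 1/(1-\rho)$ is correct). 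Your route carries the extra burden of establishing the contraction property explicitly, but in exchange it is self-contained on the uniqueness question, which the paper's phrasing ``the solution of the Bellman equation'' quietly presupposes; your alternative probabilistic argument (the reflection $x \mapsto 1-x$ is measure-preserving and maps feasible policies from state $(s,0)$ bijectively onto feasible policies from $(1-s,1)$) is also sound and arguably the most transparent explanation of why the identity holds. Either version would serve as a complete proof.
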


\begin{proof}
The Bellman equation \eqref{eq:Bellman} is a fixed point equation, and by the
classical theory of dynamic programming it can be solved by iteration
\citeaffixed[Sec. 9.5]{BerShr:AP1978}{c.f.}. We will prove the identity \eqref{eq:symmetry} by showing that it
holds for the sequence of approximations, so it also holds for the limit.

We first
set $v^0(s,r) = 0$ for all $(s,r) \in [0,1] \times \{0,1\}$
and we note that $v^0$ trivially satisfies the Reflection Identity \eqref{eq:symmetry}.
Next, for our induction hypothesis, we assume that we have $v^{n-1}(s,0) = v^{n-1}(1-s,1)$ for all $s\in [0,1]$.
The next iterate in the sequence is then given by
$$
v^{n}(s,0) = \rho s v^{n-1} (s,0) +\int_{s}^1 \max\left\{\rho v^{n-1}(s,0),1+ \rho v^{n-1}(x,1)\right\}\,dx.
$$
By applying our induction hypothesis on $v^{n-1}$ we then obtain
$$
v^{n}(s,0) = \rho s v^{n-1} (1-s,1) +\int_{s}^1 \max\left\{\rho v^{n-1}(1-s,1),1+ \rho v^{n-1}(1-x,0)\right\}\,dx.
$$
Now, after changing variables in the integral on the right-hand side, we find
\begin{align*}\label{iterateID}
v^{n}(s,0)  & = \rho s v^{n-1} (1-s,1) +\int_{0}^{1-s} \max\left\{\rho v^{n-1}(1-s,1),1+ \rho v^{n-1}(x,0)\right\}\,dx\\
            & = v^{n}(1-s,1), \notag
\end{align*}
and this completes the induction step. Now, for all $(s,r) \in [0,1]\times \{0,1\}$ we have $v^{n}(s,r) \rightarrow v(s,r)$
as $n\rightarrow \infty$ so taking limits in the last identity completes the proof of the reflection identity.\end{proof}

\subsection*{A Simpler Equation}

Using the reflection identity \eqref{eq:symmetry} we can put the Bellman equation \eqref{eq:Bellman} into a more graceful form.
Specifically, if we introduce a single variable function $v(y)$ defined by setting
$$
v(y)\equiv v(y,0) = v(1-y, 1),
$$
then substitution into our original equation \eqref{eq:Bellman} gives us
\begin{equation}\label{eq:Bellman-flipped}
v(y)  = \rho y v(y) +\int_{y}^1 \max\left\{\rho v(y),1+\rho v(1 - x)\right\}\,dx.
\end{equation}
Here we should note that by the definition
of $v(y)=v(y,0)$ we have that  $v(\cdot)$ is continuous,  $v(1)=0$, and $v$ is non-increasing on  $[0,1]$.
We will show shortly that $v$ is actually piecewise linear and it is constant on an initial segment of $[0,1]$.

\subsection*{An Alternative Interpretation}

The symmetrized equation \eqref{eq:Bellman-flipped} can be used to obtain a new probabilistic interpretation of $v(y)$.
To set this up, we first put
\begin{equation}\label{eq:f-star}
f^*(y) = \inf \{x \in [y, 1]:~\rho v(y) \leq 1+\rho v(1 - x) \}.
\end{equation}
With this definition, we can rewrite \eqref{eq:Bellman-flipped} as a bit more nicely as
\begin{equation}\label{eq:Bellman-flipped-2}
v(y)  = \rho f^*(y) v(y) +\int_{f^*(y)}^1 \left\{1+\rho v(1 - x)\right\}\,dx.
\end{equation}
Thus, one removes the maximum from the integrand \eqref{eq:Bellman-flipped} at the price of introducing
a threshold function $f^*$ that depends on $v$.

We now recursively define random variables $\{Y_i: i=1,2, \ldots\}$ by setting $Y_0=0$ and taking
$$
Y_i =
\begin{cases}
Y_{i-1}     &   \text{ if } X_i < f^*(Y_{i-1}) \\
1 - X_i     &   \text{ if } X_i \geq f^*(Y_{i-1}),
\end{cases}
$$
and finally introduce a new value function
\begin{equation}\label{eq:infinite-length-flipped}
v_0(y) \equiv \E\left[\sum_{i=1}^\infty \rho^{i-1} \1(X_i \geq f^*(Y_{i-1})) ~\bigg|~ Y_0 = y\right].
\end{equation}
The next proposition shows that $v_0(y)$ is actually equal to $v(y)$.
As part of the bargain, we obtain a concrete
characterization of the threshold function $f^*$.

\begin{proposition}[Structure of the Solution of the Bellman Equation]\label{StructureProp}
We have the following
characterizations of $f^*$ and $v_0$:
\begin{enumerate}[\rm (i)]
\item
There is a unique $\xi_0 \in [0,1]$ such that
\begin{equation*}\label{formOf}
f^*(y) = \max\{\xi_0, y\} \quad \text{for all } 0 \leq y \leq 1,
\end{equation*}
and moreover $0 \leq \xi_0 < 1/2$.

\item
The function $v_0(\cdot)$ is a solution of the Bellman equation \eqref{eq:Bellman-flipped}, so, by uniqueness, we have $v_0(y)=v(y)$
for all $0 \leq y \leq 1$.
\end{enumerate}
\end{proposition}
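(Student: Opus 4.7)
The plan is to derive (i) by analyzing a suitable auxiliary function $\phi$ that captures the defining inequality of $f^*$, and then to derive (ii) from the Markov structure of $\{Y_i\}$ combined with uniqueness of the Bellman fixed point.

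For (i), I would exploit that $v$ is continuous, non-increasing on $[0,1]$ with $v(1) = 0$ (as the authors note just after \eqref{eq:Bellman-flipped}). Define
\[
\phi(y) = \rho v(y) - 1 - \rho v(1-y), \qquad y \in [0,1].
\]
Both $\rho v(y)$ and $-\rho v(1-y)$ are non-increasing in $y$ (for the latter, $v(1-y)$ grows as $y$ does), so $\phi$ is continuous and non-increasing on $[0,1]$. Since $\phi(1/2) = -1 < 0$, there is a unique $\xi_0 \in [0, 1/2)$ with $\phi(y) > 0$ for $y < \xi_0$ and $\phi(y) \le 0$ for $y \ge \xi_0$, and $\phi(\xi_0) = 0$ whenever $\xi_0 > 0$. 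For $y \ge \xi_0$, the inequality $\rho v(y) \le 1 + \rho v(1-y)$ means $x = y$ already meets the constraint in the definition \eqref{eq:f-star} of $f^*$, and the constraint persists for $x \ge y$ because $v(1-x)$ is non-decreasing in $x$; hence $f^*(y) = y = \max\{\xi_0, y\}$.

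The case $y < \xi_0$ is the main obstacle: here the claim $f^*(y) = \xi_0$ reduces to showing that $v$ is constant on $[0, \xi_0]$, because then $\phi(\xi_0) = 0$ yields $\rho v(y) - 1 = \rho v(1-\xi_0)$, and $\xi_0$ is the smallest $x \ge y$ at which $\rho v(1-x) \ge \rho v(y) - 1$. To establish the constancy, I would insert the ansatz $v \equiv c$ on $[0, \xi_0]$ into \eqref{eq:Bellman-flipped}: for $x \in [y, \xi_0]$ one has $1 + \rho v(1-x) \le 1 + \rho v(1-\xi_0) = \rho c$, so the maximum equals $\rho c$; for $x \in [\xi_0, 1]$ the reverse inequality holds and the maximum equals $1 + \rho v(1-x)$. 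Collecting terms reduces \eqref{eq:Bellman-flipped} to
\[
(1 - \rho \xi_0) c = \int_{\xi_0}^1 \bigl[1 + \rho v(1-x)\bigr] \, dx,
\]
which is manifestly independent of $y$. Combined with the linear equation $(1-\rho y)v(y) = \int_y^1 [1+\rho v(1-x)]\, dx$ obtained on $[\xi_0, 1]$ (where $f^*(y)=y$), this produces a piecewise-defined function solving \eqref{eq:Bellman-flipped}, and uniqueness of the Bellman fixed point (already invoked via value iteration in Lemma~\ref{lm:symmetry}) identifies it with $v$.

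For (ii), I would condition on $X_1$ in \eqref{eq:infinite-length-flipped} and use that $\{Y_i\}$ is a time-homogeneous Markov chain. On $\{X_1 < f^*(y)\}$, which has probability $f^*(y)$, we have $Y_1 = y$ and no selection, contributing $\rho f^*(y) v_0(y)$; on $\{X_1 \ge f^*(y)\}$ one selection is made, $Y_1 = 1 - X_1$, and integrating against uniform $X_1$ on $[f^*(y),1]$ contributes $\int_{f^*(y)}^1 [1+\rho v_0(1-x)]\, dx$. Summing reproduces \eqref{eq:Bellman-flipped-2} with $v_0$ in place of $v$. Part (i) ensures that the maximum in \eqref{eq:Bellman-flipped} splits at exactly $f^*(y)$, so $v$ likewise satisfies \eqref{eq:Bellman-flipped-2}; since this linear equation corresponds to a $\rho$-contraction in the sup norm, its bounded solution is unique and $v_0 \equiv v$.
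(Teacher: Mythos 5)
Your proof follows the paper's own argument closely in both parts: for (i) you study the sign of $\phi(y)=\rho v(y)-1-\rho v(1-y)$ (the negative of the paper's $\Delta$), use its monotonicity and continuity to locate a unique crossing point $\xi_0<1/2$, and conclude $f^*(y)=y$ for $y\ge\xi_0$; for (ii) you condition on $X_1$ exactly as the paper does, and your added remark that the linear equation \eqref{eq:Bellman-flipped-2} is a $\rho$-contraction is correct. The one place you go beyond the paper is the case $y<\xi_0$, where you rightly observe that $f^*(y)=\xi_0$ is not a formal consequence of the definition \eqref{eq:f-star} unless $v$ is constant on $[0,\xi_0]$: if $v(y)>v(\xi_0)$ then $\rho v(y)>1+\rho v(1-\xi_0)$ and the infimum in \eqref{eq:f-star} would exceed $\xi_0$. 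The paper disposes of this step in a single sentence, so your instinct to supply the missing constancy is the right one.

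However, your argument for that constancy is circular as written. The candidate function (a constant $c$ on $[0,\xi_0]$, the linear integral equation on $[\xi_0,1]$) cannot be verified to satisfy \eqref{eq:Bellman-flipped} without already knowing its values on $[0,\xi_0]$ and that its own crossing point is $\xi_0$: for $y\in[\xi_0,1]$ the integral in \eqref{eq:Bellman-flipped} involves $v(1-x)$ with $1-x$ ranging over $[0,1-y]\supseteq[0,\xi_0]$, and $\xi_0$ itself was defined from the true $v$, so the max-splitting inequalities you invoke for the candidate are exactly what is in question. A non-circular route is available directly from \eqref{eq:Bellman-flipped-2}: with $g(x)=1+\rho v(1-x)$ one has $v(y)\bigl(1-\rho f^*(y)\bigr)=\int_{f^*(y)}^1 g(x)\,dx$ and $v(\xi_0)(1-\rho\xi_0)=\int_{\xi_0}^1 g(x)\,dx$; comparing the two identities and using $g(x)\ge g(\xi_0)=\rho v(\xi_0)$ for $x\ge\xi_0$ (and $g(x)\le\rho v(\xi_0)$ for $x\le\xi_0$) yields $v(y)\bigl(1-\rho f^*(y)\bigr)\le v(\xi_0)\bigl(1-\rho f^*(y)\bigr)$, hence $v(y)\le v(\xi_0)$; monotonicity gives the reverse inequality, so $v\equiv v(\xi_0)$ on $[0,\xi_0]$ and then $f^*(y)=\xi_0$ there. (Alternatively, constancy on the initial segment is immediate for every member of the parametrized family $V(\cdot,\xi,\rho)$ that the paper introduces right after the proposition; see \eqref{eq:V-constant}.)
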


\begin{proof}
From the definition of $f^*$ we see  that
\begin{equation} \label{eq:f-star-equal-y}
\rho v( y ) \leq  1 + \rho v( 1-y ) \quad \Rightarrow \quad f^*(y) = y.
\end{equation}
Now, for $1/2 \leq y$ we have $1-y \leq y$, so
the monotonicity of $v$ gives us the bound $\rho v( y ) \leq  1 + \rho v( 1-y )$; consequently, we have  $f^*(y)=y$ for $y \in [1/2,1]$.

If the condition \eqref{eq:f-star-equal-y} holds for all $y\in [0,1/2)$, then $f^*(y)=y$ for all $y\in [0,1]$ and we can take $\xi_0=0$.
Otherwise there is a $y_0\in [0,1/2)$ for which we have
\begin{equation*}
1 + \rho v( 1-y_0 ) <\rho v( y_0 ).
\end{equation*}
For $\Delta(y)= 1 + \rho v( 1-y ) - \rho v( y )$ we then have $\Delta(y_0)<0$ and $\Delta(1)= 1+\rho v(0) >0$, so by continuity
we have $S=\{y: \Delta(y)=0 \} \not= \emptyset$. If we now take $\xi_0$ to be the infimum of $S$, then
$\xi_0 \in [y_0, 1/2) \subset [0,1/2)$
and
$
\rho v( \xi_0 ) =  1 + \rho v( 1-\xi_0 ).
$
The definition of $f^*$ now tells us that  $f^*(y)=\xi_0$ for $y\leq \xi_0$ and
 $f^*(y)=y$ for $\xi_0 \leq y.$ This completes the proof of the first part of the proposition.

Finally, to check that $v_0$ solves the equation \eqref{eq:infinite-length-flipped}, we just condition on the value of $X_1$
and calculate the expectation of
the sum.  When we take the total expectation, we get the right side of \eqref{eq:Bellman-flipped-2}.
\end{proof}

\subsection*{Characterization of the Critical Value}

Now that we know that the threshold function $f^*$ for the solution of
Bellman equation \eqref{eq:Bellman-flipped} has the form  $f^*(y) = \max\{\xi_0,y\}$ for some $\xi_0 \in [0,1/2)$,
the main problem is to find $\xi_0$.
The natural plan is to fix $\xi \in [0,1/2]$ and to consider a general selection function of the form
$f(y) = \max\{\xi,y\} \equiv (\xi \vee y)$. We then want to calculate the associated value function and to optimize over $\xi$.

The associated value function is given by
\begin{equation}\label{eq:V-sum}
V(y,\xi, \rho) = \E\left[\sum_{i=1}^\infty \rho^{i-1} \1(X_i \geq \max\{\xi, Y_{i-1}\}) ~\bigg|~ Y_0 = y\right],
\end{equation}
and Proposition \ref{StructureProp} then tells us that
$$
\max_{\xi\in [0, 1/2]}V(y,\xi, \rho) = v(y) \quad \text{ for all } y\in [0,1].
$$
If we abbreviate $ V(y,\xi, \rho)$ by setting
$V(y) \equiv V(y,\xi, \rho)$, then by conditioning on $X_1$ in equation \eqref{eq:V-sum} we see that $V(y)$
satisfies the integral equation
\begin{align}
V(y)  & = (\xi\vee y) \rho V(y) + \int_{\xi\vee y}^1 \{1 + \rho V(1 - x) \} \, dx \nonumber\\
      & = (\xi\vee y) \rho V(y) + \int_0^{1-(\xi\vee y)} \{1 + \rho V(x)\} \, dx. \label{eq:V}
\end{align}
This equation has several attractive features.
In particular, if we set $y=1$ then from $0< \rho <1$ we see $V(1) = 0$. Also, by writing
\begin{equation*}
V(y)= \frac{1}{1- \rho (\xi\vee y)}\int_0^{1-(\xi\vee y)} \{1 + \rho V(x) \} \, dx,
\end{equation*}
we see that the right side does not change when $y \in [0, \xi]$, so we have
\begin{equation}\label{eq:V-constant}
V(y) = V(y') \quad\text{for all $0\leq y, \, y' \leq \xi$}.
\end{equation}

From now on, we will let  $V'(\xi)$ denote the right derivative of the
integral equation \eqref{eq:V} evaluated at $\xi$, and let $V'(1-\xi)$ denote
the left derivative of \eqref{eq:V} evaluated at $1-\xi$. Elsewhere $V'(y)$
simply denotes the derivative of \eqref{eq:V} evaluated at $y$.

\begin{lemma}\label{lm:4conditions}
The solution of equation \eqref{eq:V} satisfies the following four conditions:
\begin{enumerate}[\rm (i)]
  \item\label{condition1}
            $V(1-\xi) ( 1 - \rho + \rho \xi ) = \xi + \rho \xi V(\xi)$;
  \item\label{condition2}
            $V'(\xi) (1 - \rho \xi) = \rho [ V(\xi) - V(1-\xi) ] -1$;
  \item\label{condition3}
            $V'(1 - \xi) (1 - \rho + \rho \xi) = \rho [ V(1 - \xi) - V(\xi) ] -1$;
  \item\label{condition4}
            $V'(1 - \xi) (1 - \rho + \rho \xi)^2 (1 - \rho \xi) = V'(\xi)(1 - \rho \xi)^2 (1 - \rho + \rho \xi) + (1 - \rho + \rho \xi)^2 - (1 - \rho \xi)^2$.
\end{enumerate}
\end{lemma}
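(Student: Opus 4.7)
The plan is to reduce \eqref{eq:V} on the region $y \geq \xi$ (where $\xi \vee y = y$) to the cleaner form
\[
V(y)(1-\rho y) = \int_0^{1-y}\bigl\{1+\rho V(x)\bigr\}\,dx,
\]
obtained from the integral in \eqref{eq:V} by the change of variables $u=1-x$. Conditions (i)--(iii) will follow by direct substitution or differentiation. For (i), we set $y = 1-\xi$: since $\xi<1/2$, the upper limit of the integral is $\xi$, and by \eqref{eq:V-constant} the integrand equals the constant $1+\rho V(\xi)$, so the right-hand side collapses to $\xi(1+\rho V(\xi))$, and rearranging gives (i). For (ii) and (iii), we differentiate in $y$ to obtain
\[
V'(y)(1-\rho y) = \rho V(y) - 1 - \rho V(1-y), \qquad y \in (\xi, 1),
\]
and evaluate at $y = \xi^+$ and $y = (1-\xi)^-$, respectively.

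The real work is in (iv). For $y \in (\xi, 1-\xi)$, both $y$ and $1-y$ lie in $(\xi,1)$, so the derivative identity above applies to each. Setting $f(y) = V(y) - V(1-y)$ and noting that $f'(y) = V'(y) + V'(1-y)$, we solve each equation for the relevant derivative and add, obtaining
\[
(1-\rho y)(1-\rho+\rho y)\,f'(y) - \rho^2 (2y-1)\,f(y) = -(2-\rho).
\]
The crucial observation is that, since $\tfrac{d}{dy}[(1-\rho y)(1-\rho+\rho y)] = -\rho^2 (2y-1)$, the left-hand side is exactly $\tfrac{d}{dy}\bigl[(1-\rho y)(1-\rho+\rho y)\,f(y)\bigr]$. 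Integrating from $\xi$ to $1/2$ and using the trivial symmetry value $f(1/2) = 0$ yields $(1-\rho\xi)(1-\rho+\rho\xi)\,f(\xi) = (2-\rho)(1-2\xi)/2$, and therefore
\[
2\rho(1-\rho\xi)(1-\rho+\rho\xi)\,f(\xi) = \rho(2-\rho)(1-2\xi) = (1-\rho\xi)^2 - (1-\rho+\rho\xi)^2.
\]
To finish, we multiply (ii) and (iii) by $(1-\rho\xi)(1-\rho+\rho\xi)$ and subtract (ii) from (iii): the left-hand sides produce the two mixed terms of (iv), while the right-hand side reduces to $-2\rho(1-\rho\xi)(1-\rho+\rho\xi)\,f(\xi) = (1-\rho+\rho\xi)^2 - (1-\rho\xi)^2$, and rearrangement yields (iv).

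The main obstacle is recognising the exact structure of the ODE satisfied by $f(y) = V(y) - V(1-y)$; once this is spotted, the symmetric boundary value $f(1/2) = 0$ is free and the remainder is routine algebra. Conditions (i), (ii), and (iii) are direct and essentially write themselves from the simplified integral equation and its derivative.
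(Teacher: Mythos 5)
Your proof is correct, and for the hard part, condition (iv), it takes a genuinely different route from the paper. Conditions (i)--(iii) are handled the same way in both arguments (evaluate \eqref{eq:V} at $1-\xi$ using \eqref{eq:V-constant}; differentiate once and take one-sided limits at $\xi$ and $1-\xi$). For (iv), the paper differentiates the integral equation \emph{twice}, eliminates $V'(1-y)$, and recognizes the resulting second-order ODE as an exact derivative, yielding the first integral $V'(y)\,h(y)-(1-\rho y)^2=C$ with $h(y)=(1-\rho y)^2(1-\rho+\rho y)$; setting $y=1-\xi$ gives (iv). You instead work with the antisymmetric part $f(y)=V(y)-V(1-y)$, for which a single differentiation already produces an exact first-order equation,
\[
\frac{d}{dy}\Bigl[(1-\rho y)(1-\rho+\rho y)f(y)\Bigr]=-(2-\rho),
\]
and the free boundary value $f(1/2)=0$ pins down $f(\xi)$ explicitly; combining this with (ii) and (iii) gives (iv) by algebra. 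I checked the computation: the coefficient identity $\tfrac{d}{dy}[(1-\rho y)(1-\rho+\rho y)]=-\rho^2(2y-1)$, the value $(1-\rho\xi)(1-\rho+\rho\xi)f(\xi)=(2-\rho)(1-2\xi)/2$, and the factorization $(1-\rho\xi)^2-(1-\rho+\rho\xi)^2=\rho(2-\rho)(1-2\xi)$ are all right, so the conclusion follows. Your version is more economical for the lemma itself and makes the role of the symmetry point $y=1/2$ transparent. What it does not deliver is the paper's explicit first integral \eqref{eq:V'(y)-bis} for $V'(y)$ on $(\xi,1-\xi)$, which the paper reuses immediately afterwards to compute $V(0,\xi,\rho)=-\int_\xi^1 V'(y)\,dy$ and locate $\xi_0$; if you adopt your argument, that formula would still have to be derived separately (e.g.\ by integrating your first-order relation $V'(y)(1-\rho y)=\rho f(y)-1$ once $f$ is known in closed form, which your ODE does provide). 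One last remark: both your argument and the paper's implicitly assume $\xi<1/2$ so that $(\xi,1-\xi)$ is nonempty; the case $\xi=1/2$ is degenerate in either treatment and is excluded by Proposition \ref{StructureProp}.
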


\begin{proof}
Conditions \eqref{condition1}--\eqref{condition3}
are easy to check. Condition \eqref{condition1} is just \eqref{eq:V}
evaluated at $1 - \xi$ together with \eqref{eq:V-constant}.
Conditions \eqref{condition2} and \eqref{condition3} simply
follow by evaluating \eqref{eq:V} at $\xi$ and $1-\xi$ respectively
and by differentiating both sides with respect to $\xi$.

The proof of Condition \eqref{condition4} requires more work.
Consider $ y \in (\xi,  1-\xi)$  so that the integral equation \eqref{eq:V} becomes
$$
V(y) = y \rho V(y) + \int_0^{1- y} \{1 + \rho V(x) \} \, dx.
$$
Differentiating once we have
\begin{equation}\label{eq:V'(y)}
V'(y) (1-\rho y) = \rho [ V(y) - V(1-y) ] - 1,
\end{equation}
and differentiating again gives us
\begin{equation}\label{eq:V''(y)}
V''(y)(1-\rho y) - \rho V'(y) = \rho V'(y) + \rho V'(1-y).
\end{equation}
To estimate the value of $V'(1-y)$ we note that $1-y \in (\xi,  1-\xi)$,
and we evaluate the integral equation \eqref{eq:V} at $1 - y$.
We then differentiate with respect to $y$ to obtain
\begin{equation}\label{eq:V'(1-y)}
V'(1 - y) (1 - \rho + \rho y) = \rho [ V(1 - y) - V(y) ] -1.
\end{equation}
By combining equations \eqref{eq:V'(y)} and \eqref{eq:V'(1-y)} we then have
\begin{equation*}\label{eq:V'(1-y)-value}
V'(1 - y) = (1 - \rho + \rho y)^{-1} ( - V'(y) (1-\rho y) - 2 ),
\end{equation*}
which we can plug into the last addend of \eqref{eq:V''(y)} to obtain
\begin{equation}\label{eq:V''(y)-bis}
V''(y)(1-\rho y)(1 - \rho + \rho y) = V'(y) \rho (1 - 2 \rho + 3 \rho y) - 2 \rho.
\end{equation}
By multiplying both sides of \eqref{eq:V''(y)-bis} by $(1-\rho y)$ we obtain the critical identity
\begin{equation}\label{eq:V''(y)-fundamental}
V''(y)(1-\rho y)^2(1 - \rho + \rho y) = V'(y) \rho (1 - \rho y) (1 - 2 \rho + 3 \rho y) - 2 \rho (1-\rho y).
\end{equation}
For
$
h(y) = (1-\rho y)^2(1 - \rho + \rho y)
$
notice that
$
h'(y) = - \rho (1 - \rho y) (1 - 2 \rho + 3 \rho y),
$
so that we can rewrite the identity \eqref{eq:V''(y)-fundamental} as
$$
V''(y)h(y) + V'(y) h'(y) - \left[(1 - \rho y)^2\right]' = 0.
$$
An immediate integration then gives us
$$
V'(y) h (y) - (1 - \rho y)^2 = C,
$$
where $C$ is a constant, and if we take
$C = V'(\xi) h (\xi) - (1 - \rho \xi)^2$ we find
\begin{equation}\label{eq:V'(y)-bis}
V'(y) = V'(\xi) \frac{h (\xi)}{h (y) } +  \frac{(1 - \rho y)^2 - (1 - \rho \xi)^2 }{h(y)}\quad \text{for all $\xi< y < 1-\xi$.}
\end{equation}
Finally, on setting $y=1-\xi$ we recover the desired condition \eqref{condition4}.
\end{proof}

\subsection*{Calculation of the Critical Value.}

Conditions \eqref{condition1}--\eqref{condition4} in Lemma \ref{lm:4conditions}
generate a system of four equations in four unknowns,
$V(\xi), V(1-\xi), V'(\xi)$, and  $V'(1-\xi)$. By solving this system one finds
\begin{eqnarray}
  V(\xi) &=& \frac{2 - 2\xi - \rho + 2\rho\xi - 2\rho\xi^2}{2(1-\rho)(1-\rho \xi)} \label{eq:V(xi)-explicit}\\
  V(1-\xi) &=& \frac{\rho\left(2 - 4 \rho \xi - \rho^2 + 4\rho^2 \xi - 2 \rho^2 \xi^2 \right)}{2(1-\rho) (1-\rho \xi)^2  (1-\rho + \rho\xi)} \nonumber\\
  V'(\xi) &=& \frac{-2 + 4\rho - 4\rho\xi -\rho^2 + 2\rho^2 \xi^2}{2(1-\rho \xi)^2  (1-\rho + \rho\xi)} \label{eq:V'(xi)-explicit}\\
  V'(1-\xi) &=& \frac{-2 + 4\rho \xi +\rho^2 - 4\rho^2 \xi + 2 \rho^2 \xi^2}{2(1-\rho \xi) (1-\rho + \rho\xi)^2}.\nonumber
\end{eqnarray}
Finally, by substituting \eqref{eq:V'(xi)-explicit} into \eqref{eq:V'(y)-bis} we get
\begin{equation*}
V'(y) = \frac{-(2-\rho)^2 + 2(1 - \rho y)^2}{2(1-\rho + \rho y)(1-\rho y)^2} \quad \text{for all $\xi< y < 1-\xi$.}
\end{equation*}

Now, given any $\xi$, we want to compute $V(0,\xi, \rho)$.
We first recall that we have $V(1,\xi, \rho) = 0$ and
$V(y,\xi, \rho) = V(\xi,\xi, \rho)$ for all $0\leq y \leq \xi$. We therefore find that
$\frac{\partial}{\partial y}V(y,\xi, \rho)=0$ on $ 0 \leq y \leq \xi$, so on integrating we have
$$
V(1,\xi, \rho) - V(0,\xi, \rho) = \int_0^1 V'(y) \, dy = \int_\xi^1 V'(y) \, dy
$$
and hence
$$
 V(0,\xi, \rho) = - \int_\xi^1 V'(y) \, dy.
$$
We now optimize this last quantity with respect to $\xi$.
By differentiating both sides with respect to $\xi$ we get
$$
\frac{\partial}{\partial \xi} V(0,\xi, \rho) = V'(\xi)
$$
and we are interested in the value $\xi_0$ such that
$$
V'(\xi_0) = 0.
$$
Our formula \eqref{eq:V'(xi)-explicit} for $V'(\xi_0)$ tells us
that $V'(\xi_0) = 0$ if and only if
$$
 2(1 - \rho \xi_0)^2 = (2-\rho)^2.
$$
We therefore find that the unique choice for $\xi_0$ is given by
\begin{equation}\label{eq:xi0}
\xi_0 = \frac{1}{\sqrt{2}} + \frac{1-\sqrt{2}}{\rho}.
\end{equation}
A routine calculation verifies that $V''(\xi_0) < 0$,
so we have found our maximum.

When we evaluate $V(\xi_0,\xi_0, \rho)$ using
equation \eqref{eq:V(xi)-explicit}, we find
$$
V(\xi_0,\xi_0, \rho) = \frac{3 - 2\sqrt{2} - \rho + \rho\sqrt{2}}{\rho (1 - \rho)},
$$
and this gives us the main formula of  Theorem \ref{th2:geometric-mean}.
From this formula it is immediate that
$$
\lim_{\rho \uparrow 1} (1 - \rho) V(\xi_0, \xi_0, \rho) = 2 - \sqrt{2},
$$
so the proof of Theorem \ref{th2:geometric-mean} is complete.

\section{Proof of Theorem \ref{th1:fixed-mean} from Theorem \ref{th2:geometric-mean}}\label{se:proof-thm-finite-sample}

We will use our results for geometric sample sizes to get both lower and upper bounds for the
finite sample size selection problem. The lower bound is the easiest. For fixed $n$, one can use
the (now suboptimal) policy from an appropriately chosen geometric sample size problem. The proof of the
upper bound is considerably harder, and the method will be described later in this section.
Before making these arguments, we need to organize a few structural observations.

\subsection*{Selection Policies and a Bellman Equation for Finite Samples}

When the sample size $n$ is deterministic and known, the feasible policies need to take this information into account.
In particular, the selection thresholds will no longer be stationary;
they will depend on the number of sample elements that remain to be seen.

Just as in Section \ref{se:proof-thm-geometric-sample}, we consider the pairs  $(S_{i-1}, R_{i-1})$, $1\leq i \leq n$, where $S_{i-1}$ is the size of the
last selection made before time $i$ and $R_{i-1}$ is $0$ or $1$ accordingly as the last selection was a
local minimum or a local maximum.
A feasible policy $\pi \in \Pi$ again specifies a set $\Delta_{i,n}(S_{i-1}, R_{i-1})$
that depends only on past actions, but now we have dependence on the decision time $i$
and on the sample size $n$.
For any policy   $\pi \in \Pi$ the expected size of the selected sample can then be written as
\begin{equation*}
\E[A^o_n(\pi)] = \E\left[ \sum_{i=1}^n \1(X_i \in \Delta_{i,n}(S_{i-1}, R_{i-1}))\right]
\end{equation*}
and there is an optimal policy $\pi^*_n$ for which we have
$$
\E[A^o_n(\pi^*_n)] = \sup_{\pi \in \Pi} \E[A^o_n(\pi)].
$$
In this case, an optimal policy can be characterized as the solution to a finite sample Bellman equation.
Specifically, for $1\leq i \leq n$, we have
\begin{equation*}
v_{i,n}(s,r)=
\!\!
\left\{
 \begin{array}{ll}
\!\!\! sv_{i+1,n}(s,0) +\int_{s}^1 \max\left\{v_{i+1,n}(s,0),1+v_{i+1,n}(x,1)\right\}\,dx
 & \hbox{if $r=0$} \\
\!\!\! (1-s)v_{i+1,n}(s,1) +\int_0^{s} \max\left\{v_{i+1,n}(s,1),1+v_{i+1,n}(x,0)\right\}\,dx
 & \hbox{if $r=1$,}\\
 \end{array}
 \right.
\end{equation*}
and the backward induction begins by setting  $v_{n+1,n}(s,r) \equiv 0$ for all $(s,r)$ in $[0,1]\times \{0,1\}$.
This equation is justified by the same considerations that were used in the derivation of equation \eqref{eq:Bellman}.

\subsection*{Symmetry and Simplification}

For the finite sample size problem, one loses much of the nice symmetry of the geometric sample size problem. Nevertheless,
the solution of the finite sample Bellman equation still has a
reflection identity analogous to that given by Lemma \ref{lm:symmetry}.

\begin{lemma}\label{lm:symmetryFS}
The solution of the finite sample Bellman equation
satisfies
\begin{equation}\label{eq:reflection2}
v_{i,n}(s,0) = v_{i,n}(1-s,1)\quad \text{ for all }1\leq i \leq n \text{ and all } s \in [0,1].
\end{equation}
\end{lemma}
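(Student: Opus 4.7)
The plan is to mimic the proof of Lemma \ref{lm:symmetry}, but since the finite-sample Bellman equation is already solved by backward induction starting from $v_{n+1,n}(s,r) \equiv 0$, I would carry out the reflection identity by the same backward induction rather than by the iterative approximation used in the geometric case. The base case $i = n+1$ is trivial because $v_{n+1,n}(s,0) = 0 = v_{n+1,n}(1-s,1)$ for every $s \in [0,1]$.

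For the induction step, fix $i \leq n$ and assume $v_{i+1,n}(s,0) = v_{i+1,n}(1-s,1)$ for all $s \in [0,1]$. Starting from the Bellman recursion
\begin{equation*}
v_{i,n}(s,0) = s\, v_{i+1,n}(s,0) + \int_s^1 \max\bigl\{v_{i+1,n}(s,0),\, 1 + v_{i+1,n}(x,1)\bigr\}\,dx,
\end{equation*}
I would substitute the induction hypothesis in both occurrences of $v_{i+1,n}$ to rewrite $v_{i+1,n}(s,0)$ as $v_{i+1,n}(1-s,1)$ and $v_{i+1,n}(x,1)$ as $v_{i+1,n}(1-x,0)$. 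Then the change of variables $u = 1-x$ turns the integral over $[s,1]$ into an integral over $[0,1-s]$:
\begin{equation*}
v_{i,n}(s,0) = s\, v_{i+1,n}(1-s,1) + \int_0^{1-s}\max\bigl\{v_{i+1,n}(1-s,1),\, 1 + v_{i+1,n}(u,0)\bigr\}\,du.
\end{equation*}

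Finally, I would identify the right-hand side above as precisely the evaluation of the $r = 1$ line of the Bellman equation at the argument $1-s$, since $1 - (1-s) = s$ and the domain of integration matches $[0,1-s]$. This gives $v_{i,n}(s,0) = v_{i,n}(1-s,1)$, completing the induction step. There is no serious obstacle here: the argument is a direct adaptation of the computation already performed in Lemma \ref{lm:symmetry}, and the only point requiring care is to keep the two cases of the Bellman equation aligned under the reflection $s \mapsto 1-s$ and the change of variables $x \mapsto 1-x$, which exchange the roles of minima and maxima in the selection process.
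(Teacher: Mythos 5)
Your proof is correct and follows essentially the same route as the paper: backward induction on $i$, substitution of the induction hypothesis, and the change of variables $x \mapsto 1-x$ to recognize the $r=1$ line of the Bellman equation evaluated at $1-s$. The only cosmetic difference is that you anchor the base case at $i=n+1$ (where everything vanishes) while the paper starts at $i=n$ with the explicit values $v_{n,n}(s,0)=v_{n,n}(1-s,1)=1-s$; both are valid.
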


\begin{proof}
Again we use an induction argument, but this time we do not need to take limits of an infinite sequence of approximate solutions.
Instead we simply use backward induction and always work with exact solutions.

Since we have $v_{n,n}(s,0) = 1-s$ and $v_{n,n}(1-s,1)=1-s$, we see that
equation \eqref{eq:reflection2}
holds for $i=n$, so we
suppose by induction that  $v_{i+1,n}(s,0) = v_{i+1,n}(1-s,1)$.
One then has
$$
v_{i,n}(s,0)= sv_{i+1,n}(s,0) +\int_{s}^1 \max\left\{v_{i+1,n}(s,0),1+v_{i+1,n}(x,1)\right\} \, dx,
$$
so by applying the induction hypothesis on the right-hand side one obtains
$$
v_{i,n}(s,0)= sv_{i+1,n}(1-s,1) +\int_{s}^1 \max\left\{v_{i+1,n}(1-s,1),1+v_{i+1,n}(1-x,0)\right\} \, dx.
$$
If we now change variable in this last integral, we get
\begin{align*}
v_{i,n}(s,0) & = sv_{i+1,n}(1-s,1) +\int_{0}^{1-s} \max\left\{v_{i+1,n}(1-s,1),1+v_{i+1,n}(x,0)\right\} \, dx\\
             & = v_{i,n}(1-s,1),
\end{align*}
and this completes the induction step.
\end{proof}

We can now define a new single variable function $v_{i,n}(y)$ by
setting
\begin{equation}\label{eq:Bellman-finite-equivalence}
v_{i,n}(y) = v_{i,n}(y,0) = v_{i,n}(1-y,1)
\end{equation}
and, by substitution into the original finite sample Bellman equation
we have
\begin{equation}\label{eq:Bellman-FINITE-flipped}
v_{i,n}(y)  = y v_{i+1,n}(y) +\int_{y}^1 \max\left\{ v_{i+1,n}(y),1+ v_{i+1,n}(1 - x)\right\}\,dx.
\end{equation}
Here we should also note that $v_{i,n}(\cdot)$ is continuous and non-increasing on $[0,1]$
for all $1\leq i \leq n$.

\subsection*{The Threshold Functions}

We now define the finite-sample equivalent of the threshold function \eqref{eq:f-star} by setting
\begin{equation}\label{eq:f-FINITE-star}
f^*_{i,n}(y) = \inf \{x \in [y, 1]:~ v_{i+1,n}(y) \leq 1+ v_{i+1,n}(1 - x) \}.
\end{equation}
If we then set $Y_0 = 0$ and  define $Y_i$ recursively by setting
\begin{equation}\label{eq:Yi-optimal-finite}
Y_i =
\begin{cases}
    Y_{i-1} & \text{if $X_i< f^*_{i,n}(Y_{i-1})$}\\
    1 - X_{i} & \text{if $X_i\geq f^*_{i,n}(Y_{i-1})$},
\end{cases}
\end{equation}
then, in complete parallel to the geometric case, we see that the solution of the finite
sample Bellman equation \eqref{eq:Bellman-FINITE-flipped}
can be written more probabilistically as
\begin{equation}\label{eq:v1n}
v_{1,n}(y) = \E\left[\sum_{i=1}^n \1(X_i \geq f^*_{i,n}(Y_{i-1})) ~\bigg|~ Y_0 = y\right].
\end{equation}
Finally, from equation \eqref{eq:Bellman-finite-equivalence} we have
$$
v_{1,n}(0) = v_{1,n}(0,0) = v_{1,n}(1,1) = \E[A^o_n(\pi^*_n)],
$$
and this gives us the last piece of structural information that we need.

\subsection*{Proof of the Lower Bound}
To prove that
$$
(2 - \sqrt{2}) n \leq \E[A^o_n(\pi^*_n)] \quad \text{ for all } n\geq 1
$$
we only need to choose a good suboptimal policy. We now
fix $\xi\in [0,1/2]$ and we consider the policy in which
$X_i$ is selected if and only if $X_i \geq \max\{\xi, Y_{i-1}\}$.
Here, $Y_0 = y$ is in the interval  $[0, 1-\xi]$ and the $Y_i$'s are defined recursively by setting
$$
Y_i =
\begin{cases}
    Y_{i-1} & \text{if $X_i< \max\{\xi, Y_{i-1}\}$}\\
    1 - X_{i} & \text{if $X_i\geq \max\{\xi, Y_{i-1}\}$}.
\end{cases}
$$
The sequence $\{Y_i: i=0,1,\ldots\}$  is a discrete-time Markov Chain on the state space $[0, 1-\xi]$.
For a measurable $A\subseteq [0, 1-\xi]$ we let $|A|$ denote the Lebesgue measure of $A$, and for a measurable set $B \subseteq [0, 1-\xi]$
we write $1-B$ as shorthand for the set $\{u \in [0,1]:~ 1-u \in B\}$. Given these abbreviations, the
transition kernel of the process $\{Y_i: i=0,1,\ldots\}$  can
be written as
$$
K(x,C) = \1(x\in C) (\xi \vee x) + |1 - C \cap [\xi \vee x, 1]|.
$$
It is now easy to check that the process $\{Y_i\}$ has a unique stationary distribution $\gamma$,
and in fact  $\gamma$ is just the uniform distribution on $[0, 1-\xi]$,
(i.e., $\gamma(C) = (1-\xi)^{-1}|C|$ for all measurable $C\subseteq [0, 1-\xi]$).

For any starting value $Y_0=y \in [0, 1-\xi]$, the suboptimality
of the selection functions $\max\{\xi, Y_{i-1}\}$
gives that
$$
\E\left[\sum_{i=1}^n \1(X_i \geq \max\{\xi, Y_{i-1}\}) ~\bigg|~ Y_0 = y\right] \leq v_{1,n}(y).
$$
Since $v_{1,n}(y)$ is non-increasing in $y$, we see that for any starting distribution $\mu$ supported on $[0, 1-\xi]$
one has
$$
\E_\mu\left[\sum_{i=1}^n \1(X_i \geq \max\{\xi, Y_{i-1}\})\right] \leq \E_\mu[v_{1,n}(Y_0)] \leq v_{1,n}(0) =  \E[A^o_n(\pi^*_n)].
$$
If one chooses the starting distribution $\mu$ to be the stationary distribution $\gamma$,
then
\begin{equation}\label{eq:stat-dist-LB}
\E_\gamma\left[\sum_{i=1}^n \1(X_i \geq \max\{\xi, Y_{i-1}\})\right]
= n\, \E_\gamma\left[1- \max\{\xi, Y_{0}\})\right]
\leq \E[A^o_n(\pi^*_n)],
\end{equation}
and we can compute the first expression explicitly. So, we have
$$
\E_\gamma\left[1- \max\{\xi, Y_{0}\}\right]
= \frac{1}{1-\xi} \int_0^{1-\xi} 1- \max\{\xi, y\}\,dy
= \frac{1 - 2\xi^2}{2(1-\xi)}.
$$
We can maximize this by taking $\xi = 1 - 2^{-1/2}$ (as in \eqref{eq:xi0} when $\rho = 1$),
and we then obtain
$$
\E_\gamma\left[1- \max\{\xi, Y_{0}\})\right]  = 2 - \sqrt{2}.
$$
Together with the inequality \eqref{eq:stat-dist-LB}, this completes the proof of our lower bound.

\subsection*{Proof of the Upper Bound}

The proof of the upper bound in Theorem \ref{th1:fixed-mean} requires a more sustained argument.
Unlike the problem for geometric samples, the value function  $v_{i,n}(\cdot)$
is no longer constant on an initial segment of $[0,1]$. Nevertheless, the next proposition tells us that
the value function does have a useful uniform boundedness on an initial segment.
This is the first of several structural observations that we will need to obtain our
upper bound for $\E[A^o_n (\pi^*_n)]$.

\begin{proposition}[Value Function Initial Segment Bounds]\label{pr:f-lower-bound}
For all $0 \leq u <1/6$ and $n\geq 2$, the functions $v_{i,n}(\cdot)$
defined by the Bellman recursion \eqref{eq:Bellman-FINITE-flipped}
satisfy
\begin{enumerate}[\rm (i)]
    \item \label{it:LB-Bellman-u-5/6}
            $ 1 < v_{i,n}(u) - v_{i,n}(5/6)$,  for all  $ 1 \leq i \leq n - 1$;
    \item \label{it:UB-Bellman-u-1/6}
            $ v_{i,n}(u) - v_{i,n}(1/6) < 1$, for all $ 1 \leq i \leq n $.
\end{enumerate}
Moreover, for $n\geq 3$, the threshold functions $f^*_{i,n}(y)$ defined by equation \eqref{eq:f-FINITE-star}
are guaranteed to satisfy $1/6 \leq f^*_{i,n}(y)$ for $y \in [0,1]$ and $1 \leq i \leq n - 2$.
\end{proposition}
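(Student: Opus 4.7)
I would prove the three statements together by backward induction on $i$, with direct computation at the base cases $i=n$ (for (ii)) and $i=n-1$ (for (i) and (ii)). The Bellman recursion with terminal condition $v_{n+1,n}\equiv 0$ yields $v_{n,n}(y)=1-y$, and one further step produces the closed form $v_{n-1,n}(y)=\tfrac{3}{2}-\tfrac{3}{2}y^2$, from which both base cases are checked by a direct calculation. The inductive step from $i+1$ to $i$ establishes (iii), (ii), (i) in that order.

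Part (iii) at $i$ is essentially immediate from (i) at $i+1$. For $y\ge 1/6$ the claim is trivial since $f^*_{i,n}(y)\ge y$. For $y<1/6$ and any $x\in[y,1/6)$, one has $1-x>5/6$, so by monotonicity $v_{i+1,n}(1-x)\le v_{i+1,n}(5/6)$, and then (i) at $i+1$ gives
\[
v_{i+1,n}(y)-v_{i+1,n}(1-x)\;\ge\;v_{i+1,n}(y)-v_{i+1,n}(5/6)\;>\;1,
\]
so the inequality defining the infimum in \eqref{eq:f-FINITE-star} fails on $[y,1/6)$ and hence $f^*_{i,n}(y)\ge 1/6$.

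Part (ii) at $i$ then falls out of a ``suboptimal threshold'' comparison that uses the freshly proved (iii). Writing $z_u=f^*_{i,n}(u)\ge 1/6$, the value $z_u$ is an admissible (though in general suboptimal) threshold when the state is $1/6$, so plugging it into the Bellman maximization for $v_{i,n}(1/6)$ and subtracting the Bellman equality for $v_{i,n}(u)$ (in which the integrals $\int_0^{1-z_u} v_{i+1,n}(w)\,dw$ cancel) yields
\[
v_{i,n}(u)-v_{i,n}(1/6)\;\le\;z_u\bigl(v_{i+1,n}(u)-v_{i+1,n}(1/6)\bigr)\;<\;z_u\;\le\;1,
\]
where the middle strict inequality is (ii) at $i+1$.

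The hard part will be (i) at $i$. Using the infimum-characterization identity $v_{i+1,n}(u)=1+v_{i+1,n}(1-z_u)$ (forced by continuity of $v_{i+1,n}$ at the infimum) together with the Bellman equalities for $v_{i,n}(u)$ and $v_{i,n}(5/6)$ (the latter simplified by the monotonicity-forced identity $f^*_{i,n}(5/6)=5/6$), and setting $\eta=1-z_u\in(1/6,5/6)$, the difference rearranges to
\[
v_{i,n}(u)-v_{i,n}(5/6)\;=\;\tfrac{5}{6}+\Psi(\eta)-\tfrac{5}{6}\,v_{i+1,n}(5/6),
\]
where $\Psi(\eta)=(1-\eta)\,v_{i+1,n}(\eta)+\int_{1/6}^{\eta}v_{i+1,n}(w)\,dw$. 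Since $v_{i+1,n}$ is non-increasing, so is $\Psi$ on $[1/6,5/6]$, and substituting $\eta=5/6$ reduces the desired $v_{i,n}(u)-v_{i,n}(5/6)>1$ to the single middle-region integral inequality
\[
\int_{1/6}^{5/6}\bigl[v_{i+1,n}(w)-v_{i+1,n}(5/6)\bigr]\,dw\;>\;\tfrac{1}{6}.
\]
This is where the real work sits: (i) and (ii) at $i+1$ only pin down $v_{i+1,n}$ near the endpoints of $[1/6,5/6]$, so to close the loop one must either enlarge the inductive hypothesis to carry a quantitative pointwise bound on $v_{i+1,n}(w)-v_{i+1,n}(5/6)$ through the middle region, or peel off one further Bellman step and exploit (i) and (ii) at $i+2$ to control the integrand. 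The generous choice of the cutoff $1/6$ is meant to leave enough slack for a fairly crude middle-region estimate to suffice.
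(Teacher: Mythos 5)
Your reductions are sound up to the crux, but the crux of part (i) is left unproved, and that is where the paper's only nontrivial auxiliary result lives. You correctly reduce the inductive step of (i) to the middle-region inequality
\begin{equation*}
\int_{1/6}^{5/6}\bigl[v_{i+1,n}(w)-v_{i+1,n}(5/6)\bigr]\,dw\;>\;\tfrac{1}{6},
\end{equation*}
and you correctly observe that the induction hypotheses (i)--(ii) at $i+1$ do not control the integrand away from the endpoints; but you then stop, offering only two candidate strategies without executing either. This is a genuine gap: without a quantitative pointwise lower bound on $v_{i+1,n}(w)-v_{i+1,n}(5/6)$ for $w$ in the interior of $[1/6,5/6]$, the induction does not close. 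The paper supplies exactly this missing ingredient as a separate lemma (Lemma \ref{lm:Bellman-growth}, ``Restricted Supermodularity''): for $y\in[0,1/2]$ and $u\in[y,1-y]$ the differences $v_{i,n}(u)-v_{i,n}(1-y)$ are non-decreasing as $i$ decreases, so in particular
$v_{i+1,n}(w)-v_{i+1,n}(5/6)\ge v_{n,n}(w)-v_{n,n}(5/6)=5/6-w$ on $[1/6,5/6]$, whence the integral is at least $\int_{1/6}^{5/6}(5/6-w)\,dw=2/9>1/6$. That lemma is itself proved by a separate backward induction on the two-parameter family of differences, using an exact recursion for $v_{i-1,n}(u)-v_{i-1,n}(1-y)$ obtained from the Bellman equation; it is not a strengthening of (i)--(ii) but an independent monotonicity-in-$i$ statement, and your plan needs it (or an equivalent) spelled out and proved.

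The rest of your outline is correct and in places diverges usefully from the paper. Your part (iii) is the paper's argument verbatim. For part (ii) the paper splits $[0,1]$ into $[0,1/6]$, $[1/6,5/6]$, $[5/6,1]$ and bounds the difference of maxima on each piece (using part (i) at $i+1$ on the outer pieces); your alternative --- inserting the suboptimal threshold $z_u=f^*_{i,n}(u)\ge 1/6$ into the maximization for $v_{i,n}(1/6)$ so that the integrals cancel and $v_{i,n}(u)-v_{i,n}(1/6)\le z_u\bigl(v_{i+1,n}(u)-v_{i+1,n}(1/6)\bigr)<1$ --- is shorter and equally valid, provided you note (as you do) that the cases $i=n$ and $i=n-1$ must be checked directly, since $f^*_{n-1,n}(u)=u<1/6$ and the threshold comparison is then unavailable. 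Your algebraic rearrangement of (i) via $\Psi(\eta)$ and the identity $v_{i+1,n}(u)=1+v_{i+1,n}(1-z_u)$ at the infimum also checks out. But as it stands the proposal proves (ii) and (iii) while reducing (i) to an inequality it does not establish.
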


Naturally enough, the proof of this proposition depends on inductive arguments that exploit the defining Bellman equation.
The first of these arguments gives us some control over the
changes of $v_{i,n}(u)$ when we change both $i$ and $u$.

\begin{lemma}[Restricted Supermodularity]\label{lm:Bellman-growth}
For $y\in [0,1/2]$ and $u\in [y, 1-y]$ the functions $\{ v_{i,n}(\cdot)\}$ defined by the  Bellman recursion
\eqref{eq:Bellman-FINITE-flipped} satisfy
$$
v_{i+1,n}(u) - v_{i+1,n}(1-y) \leq v_{i,n}(u) - v_{i,n}(1-y) \quad \text{for all $1\leq i\leq n$.}
$$
\end{lemma}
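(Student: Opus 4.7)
I would prove the lemma by backward induction on $i$ from $i = n$ down to $i = 1$. The base case is immediate: $v_{n+1,n} \equiv 0$ and $v_{n,n}(y) = 1 - y$, so the claim becomes $0 \leq (1-u) - y$, which holds by $u \leq 1-y$.

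For the induction step, set $G_i(y) := v_{i,n}(y) - v_{i+1,n}(y)$; the claim is equivalent to $G_i(u) \geq G_i(1-y)$. Applying the identity $\max\{a,b\} = a + (b-a)_+$ to the Bellman recursion \eqref{eq:Bellman-FINITE-flipped} yields
\begin{equation*}
G_i(y) = \int_y^1 \bigl(1 + v_{i+1,n}(1-x) - v_{i+1,n}(y)\bigr)_+\,dx.
\end{equation*}
Splitting $G_i(u)$ at $x = 1-y$: on $[1-y, 1]$ the argument $1-x$ lies in $[0,y]$, and since $v_{i+1,n}$ is non-increasing with $y \leq u \leq 1-y$, both positive-part clips in $G_i(u)$ and $G_i(1-y)$ are inactive, so the two contributions differ only by $y[v_{i+1,n}(1-y) - v_{i+1,n}(u)]$. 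After substituting $t = 1-x$ in the remaining piece, the induction step reduces to the single integral inequality
\begin{equation*}
\int_y^{1-u} \bigl(1 + v_{i+1,n}(t) - v_{i+1,n}(u)\bigr)_+\,dt \;\geq\; y\bigl[v_{i+1,n}(u) - v_{i+1,n}(1-y)\bigr].
\end{equation*}

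To estimate the left-hand side I split $[y, 1-u]$ at $t = u$: on $[y, u]$ monotonicity forces $v_{i+1,n}(t) \geq v_{i+1,n}(u)$, so the integrand equals $1 + v_{i+1,n}(t) - v_{i+1,n}(u) \geq 1$, while on $[u, 1-u]$ the positive part is non-negative and captures the decay of $v_{i+1,n}$ below $v_{i+1,n}(u)$. A crude estimate gives a lower bound of $(u - y) + \int_y^u [v_{i+1,n}(t) - v_{i+1,n}(u)]\,dt$.

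The main obstacle is closing the quantitative gap: the simple bound $u - y$ need not dominate $y[v_{i+1,n}(u) - v_{i+1,n}(1-y)]$ when $v_{i+1,n}$ has a large drop from $u$ to $1-y$, so one must also exploit $\int_y^u [v_{i+1,n}(t) - v_{i+1,n}(u)]\,dt$ together with the non-negative integral on $[u, 1-u]$. I expect the inductive hypothesis at step $i+1$, which controls $v_{i+1,n}(u) - v_{i+1,n}(1-y)$ in terms of the corresponding quantities at step $i+2$, combined with monotonicity of $v_{i+1,n}$, to supply the required matching. The constraint $u \geq y$ is essential here: $G_i$ is not globally monotone in $y$ (for instance $G_{n-1}(y) = \tfrac12 + y - \tfrac32 y^2$ is increasing on $[0, 1/3]$), so without $u \geq y$ the conclusion $G_i(u) \geq G_i(1-y)$ can fail.
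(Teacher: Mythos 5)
Your reduction is algebraically correct up to the displayed inequality
\begin{equation*}
\int_y^{1-u}\bigl(1 + v_{i+1,n}(t) - v_{i+1,n}(u)\bigr)_+\,dt \;\geq\; y\bigl[v_{i+1,n}(u) - v_{i+1,n}(1-y)\bigr],
\end{equation*}
but that inequality is exactly where the content of the lemma lives, and you leave it unproved. Two concrete problems. First, the inequality is false for a general non-increasing function in place of $v_{i+1,n}$: take one that is constant on $[y,u]$ and then drops by a large amount just past $u$, which kills the positive part on $[u,1-u]$ while making the right-hand side arbitrarily large. So it cannot follow from monotonicity plus ``crude estimates''; you would need quantitative control on how fast $v_{i+1,n}$ can decrease. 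Second, the induction hypothesis you hope to invoke points the wrong way: at step $i+1$ it gives a \emph{lower} bound $v_{i+1,n}(u) - v_{i+1,n}(1-y) \geq v_{i+2,n}(u) - v_{i+2,n}(1-y)$, whereas your right-hand side requires an \emph{upper} bound on $v_{i+1,n}(u) - v_{i+1,n}(1-y)$. As stated, the plan does not close.

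The paper's proof avoids this by never comparing the increment $G_i$ at two points of the same level. Writing $d_i(x) := v_{i,n}(x) - v_{i,n}(1-y)$, one subtracts the Bellman recursions at $u$ and at $1-y$ and observes that the portions of the two integrals over $[1-y,1]$ cancel exactly, because there $v_{i,n}(1-y) \leq v_{i,n}(u) \leq v_{i,n}(1-x)$ forces both maxima to equal $1 + v_{i,n}(1-x)$. After a change of variables this yields the identity
\begin{equation*}
d_{i-1}(u) = u\, d_i(u) + \int_y^{1-u} \max\bigl\{\, d_i(u),\; 1 + d_i(x) \,\bigr\}\,dx ,
\end{equation*}
which expresses the level-$(i-1)$ difference as a monotone increasing functional of the level-$i$ differences $d_i(x)$ for $x \in \{u\} \cup [y, 1-u] \subseteq [y,1-y]$ --- precisely the arguments covered by the backward induction hypothesis $d_{i+1}(x) \leq d_i(x)$. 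Substituting the hypothesis into the functional and recognizing the result as $d_i(u)$ finishes the step; no bound on $v_{i+1,n}$ itself is ever needed. If you want to salvage your $G_i$ formulation, you would have to prove the missing quantitative bound on $v_{i+1,n}(u) - v_{i+1,n}(1-y)$ by a separate argument, which the difference identity above renders unnecessary.
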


\begin{proof}
We use backward induction on $i$, and, since $n$ is fixed, we abbreviate $ v_{i,n}(\cdot)$ by
$v_i(\cdot)$.
For $i=n$ we have $v_{n+1}(u) = 0$ for all $u\in [0,1]$. Moreover,
$v_n(u) = 1 - u$ and $v_n(1-y) = y$, so we have
\begin{equation*}\label{eq:induction-assumptionBis}
v_{n+1}(u) - v_{n+1}(1-y) \leq v_{n}(u) - v_{n}(1-y) \quad \text{for all $u \in [y, 1-y]$}.
\end{equation*}
Now, for our backward induction, we can assume more generally that
\begin{equation*}
v_{i+1}(u) - v_{i+1}(1-y) \leq v_{i}(u) - v_{i}(1-y) \quad \text{for all $u \in [y, 1-y]$}.
\end{equation*}
The Bellman equation \eqref{eq:Bellman-FINITE-flipped} then gives us
\begin{align*}
v_{i-1}(u) - v_{i-1}(1-y) & = u v_{i}(u) +\int_{u}^1 \!\!\! \max\left\{ v_{i}(u),1+ v_{i}(1 - x)\right\}\,dx\\
                          & - (1-y) v_{i}(1-y) -\int_{1-y}^1 \!\!\!\! \max\left\{ v_{i}(1-y),1+ v_{i}(1 - x)\right\}\,dx,
\end{align*}
and, since $u\leq 1-y$, we can break up the first integral to obtain
\begin{align}
v_{i-1}(u) -& v_{i-1}(1-y)  = u v_{i}(u) \! - \! (1-y) v_{i}(1-y) + \! \int_{u}^{1-y} \!\!\!\!\!\!\! \max\left\{ v_{i}(u),1+ v_{i}(1 - x)\right\}\,dx \nonumber\\
                          &  + \int_{1-y}^1 \!\!\!\! \max\left\{ v_{i}(u),1+ v_{i}(1 - x)\right\} - \max\left\{ v_{i}(1-y),1+ v_{i}(1 - x)\right\}\,dx. \label{eq:integral-max-diff}
\end{align}
For $x\in [1-y, 1]$, we have
$v_{i}(y) \leq v_i(1-x)$ since $v_i(\cdot)$ is non-increasing on $[0,1]$.
Therefore, since $y \leq u \leq 1-y$ we have $v_{i}(1 - y) \leq v_{i}(u) \leq v_{i}(y)$
so that for $x\in [1-y, 1]$ we have
$$
\max\left\{ v_{i}(u),1+ v_{i}(1 - x)\right\} = \max\left\{ v_{i}(1-y),1+ v_{i}(1 - x)\right\} = 1+ v_{i}(1 - x),
$$
and we see that the integral \eqref{eq:integral-max-diff} equals $0$.
We now have just the identity
$$
v_{i-1}(u) - v_{i-1}(1-y)  = u v_{i}(u) - (1-y) v_{i}(1-y) +\int_{u}^{1-y} \!\!\!\!\! \max\left\{ v_{i}(u),1+ v_{i}(1 - x)\right\}\,dx
$$
or, equivalently,
\begin{align*}
v_{i-1}(u) - v_{i-1}(1-y) & = u \left(v_{i}(u) - v_{i}(1-y)\right) \\
                          & + \int_{u}^{1-y} \!\!\!\!\! \max\left\{ v_{i}(u) - v_{i}(1-y), 1+ v_{i}(1 - x) - v_{i}(1-y)\right\}\,dx.
\end{align*}
Changing variables in this last integral then gives us the convenient identity
\begin{align}\label{eq:recursion-Bellman-difference}
v_{i-1}(u) - v_{i-1}(1-y) & = u \left(v_{i}(u) - v_{i}(1-y)\right) \nonumber\\
                          & + \int_{y}^{1-u}  \!\!\!\!\! \max\left\{ v_{i}(u) - v_{i}(1-y), 1+ v_{i}( x ) - v_{i}(1-y)\right\}\,dx.
\end{align}
Since $ y \leq u $ and $ 1-u \leq 1-y $, we can now use our induction
assumption to obtain
\begin{align*}
v_{i-1}(u) - v_{i-1}(1-y) & \geq u \left(v_{i+1}(u) - v_{i+1}(1-y)\right) \\
                          & + \! \int_{y}^{1-u} \!\!\!\!\!\!\!\!\!\! \max\left\{ v_{i+1}(u) \! -  \! v_{i+1}(1-y), 1 \! + \! v_{i+1}( x ) \! - \! v_{i+1}(1-y)\right\}\,dx\\
                          & =  v_{i}(u) - v_{i}(1-y),
\end{align*}
where the last equality follows from the recursion \eqref{eq:recursion-Bellman-difference}.
\end{proof}

We can now complete the proof of the  Value Function Bounds in Proposition \ref{pr:f-lower-bound}.

\begin{proof}[Proof of Proposition \ref{pr:f-lower-bound}]
We begin by proving \eqref{it:LB-Bellman-u-5/6} by backwards induction on $i$. As before,
since $n \geq 2$ is fixed, we abbreviate $v_{i,n} (\cdot)$ by $v_i (\cdot)$.
For $i=n-1$ one iteration of the recursive definition of the Bellman
equation \eqref{eq:Bellman-FINITE-flipped} gives us that
$v_{n-1}(x) = (3/2) (1-x^2)$, so
$v_{n-1}(u) - v_{n-1}(5/6) = (3/2)(25/36 - u^2) > 1$ since by hypothesis we have $u < 1/6$.
We now make the induction assumption
\begin{equation*}
1 < v_{i+1}(u) - v_{i+1}(5/6) \quad \text{ for $0 \leq u <1/6$},
\end{equation*}
and observe from the Bellman equation \eqref{eq:Bellman-FINITE-flipped} that
\begin{align*}
 v_{i }(u)  - v_{i }(5/6)
       & = u  v_{i + 1 }(u) + \int_u^1\max\{v_{i + 1 }(u), 1 + v_{i + 1 }(1 - x) \} \, dx\\
       & - 5/6 \, v_{i + 1 }(5/6) - \int_{5/6}^1\max\{v_{i + 1 }(5/6), 1 + v_{i + 1 }(1 - x) \} \, dx.
\end{align*}
Since $u < 5/6$, the monotonicity of $v_i(\cdot)$ implies $v_{i + 1 }(5/6) \leq v_{i + 1 }(u)$. So,
for  $x \in [5/6, 1]$, we have
$
\max\{v_{i + 1 }(5/6), 1 + v_{i + 1 }(1 - x) \}
\leq
\max\{v_{i + 1 }(u), 1 + v_{i + 1 }(1 - x) \}.
$
This gives us the lower bound
\begin{align*}
u \left(  v_{i + 1 }(u) \! - \! v_{i + 1 }(5/6) \right)
&  \! + \!  \int_u^{5/6} \!\!\!\!\!\! \max\{v_{i + 1 }(u)  \! - \! v_{i + 1 }(5/6),  1  \! +  \! v_{i + 1 }(1 - x)  \! - \! v_{i + 1 }(5/6)\} \, dx\\
& \leq
v_{i }(u)  - v_{i }(5/6).
\end{align*}
To get a lower bound for the integral of the maximum,  we replace the integrand by $v_{i + 1 }(u)  \! - \! v_{i + 1 }(5/6)$
on $[u, 1/6)$ and replace it by $1  \! +  \! v_{i + 1 }(1 - x)  \! - \! v_{i + 1 }(5/6)$ on $[1/6, 5/6]$.
Changing variables then gives us
\begin{equation}\label{eq:inequality-two-integrals}
\frac{1}{6} \left( v_{i + 1 }(u) - v_{i + 1 }(5/6) \right)
+ \int_{1/6}^{5/6}  \!\!\!\!\! \{ 1 + v_{i + 1 }( x ) -v_{i + 1 }(5/6)\}  \, dx
  \leq
v_{i }( u )  - v_{i }(5/6).
\end{equation}
By our induction hypothesis the first addend
satisfies the bound
\begin{equation}\label{eq:lower-bound1/6}
\frac{1}{6} < \frac{1}{6} \left( v_{i + 1 }(u) - v_{i + 1 }(5/6) \right),
\end{equation}
and by Lemma \ref{lm:Bellman-growth},
the second integral satisfies the bound
$$
\int_{1/6}^{5/6} \{1 + v_{n}(x) -v_{n}(5/6)\} \, dx \leq \int_{1/6}^{5/6} \{1 + v_{i + 1 }(x) -v_{i + 1 }(5/6)\} \, dx.
$$
If we now recall that $v_{n}(x) = 1-x$ and
compute the integral on the left-hand side we then obtain
\begin{equation}\label{eq:lower-bound32/36}
\frac{32}{36} \leq \int_{1/6}^{5/6} \{ 1 + v_{i + 1 }(x) -v_{i + 1 }(5/6) \} \, dx .
\end{equation}
Finally, adding \eqref{eq:lower-bound1/6} and \eqref{eq:lower-bound32/36} and
recalling \eqref{eq:inequality-two-integrals} gives us our target bound
$$
1 < \frac{38}{36} \leq v_{i }(u)  - v_{i }(5/6).
$$

\bigskip

To prove condition \eqref{it:UB-Bellman-u-1/6} we again use
backwards induction.
For $i=n$ we have $v_n(u) = 1 - u$, so $v_n(u) - v_n(1/6) = 1/6 - u < 1$.
Suppose now that
$$
v_{i+1} ( u ) - v_{i+1} (1/6) < 1 \quad \text{ for $0 \leq u <1/6$}.
$$
The Bellman recursion \eqref{eq:Bellman-FINITE-flipped}
then gives us
\begin{align*}
v_{i} ( u ) & - v_{i} (1/6)  \leq
              \int_0^{1/6} \!\!\!\!\!\! \max\{v_{i + 1 }(u)  \! - \! v_{i + 1 }(1/6),  1  \! +  \! v_{i + 1 }(1 - x)  \! - \! v_{i + 1 }(1/6)\} \, dx \\
            & + \int_{1/6}^{5/6} \!\!\!\!\!\! \max\{v_{i + 1 }(u) ,  1  +  v_{i + 1 }( x) \} - \max\{v_{i + 1 }(1/6) ,  1  +  v_{i + 1 }( x) \}  \, dx \\
            & + \int_{5/6}^{1} \!\!\!\!\!\! \max\{v_{i + 1 }(u) ,  1  +  v_{i + 1 }(1 - x) \} - \max\{v_{i + 1 }(1/6) ,  1  +  v_{i + 1 }(1 - x) \} \, dx.
\end{align*}

For $x \in [0, 1/6]$, we can check that first integrand is bounded by 1. To see this, we first note that left
maximand is bounded by 1 by the induction assumption. Next, we note that $v_{i+1}(1-x) \leq v_{i+1}(5/6)$
so, for the second maximand one has the bound
$
1  +  v_{i + 1 }(1 - x)  - v_{i + 1 }(1/6)
\leq 1  +  v_{i + 1 }(5/6)  - v_{i + 1 }(1/6)
$
and this last term is non-positive by the inequality \eqref{it:LB-Bellman-u-5/6}.

For $x \in [1/6, 5/6]$ the second integrand is bounded by
$$
 \max\{v_{i + 1 }(u) - v_{i + 1 }(1/6) ,  1  +  v_{i + 1 }( x)  - v_{i + 1 }(1/6)  \}
 \leq 1,
$$
since both maximands are bounded by 1; the first one because of the induction assumption
and the second one because it is non-increasing in $x$ and attains its maximum for $x = 1/6$.

Finally, for $x \in [ 5/6, 1 ]$ the third integrand is bounded by
$$
 \max\{v_{i + 1 }(u) - 1  - v_{i + 1 }(1 - x),  0 \}
 \leq 0
$$
since $ - v_{i+1}(1-x) \leq - v_{i+1}(1/6)$, and by the induction assumption, we see that the left maximand
$v_{i + 1 }(u) - 1  - v_{i + 1 }( 1/6 )$  is also non-positive. So, at last we have
$$
v_{i} ( u ) - v_{i} (1/6)  \leq 5/6 < 1,
$$
and this completes the proof of condition \eqref{it:UB-Bellman-u-1/6}.

\bigskip

The last claim of Proposition \ref{pr:f-lower-bound} is that
$1/6 \leq f^*_{i,n}(y) $ for all $y \in [0,1]$ and all $1 \leq i \leq n - 2$, $n \geq 3$.
If $y \in [1/6,  1]$ this bound is trivial since $y \leq f^*_{i,n}(y)$ for
all $1\leq i\leq n$.
If $y\in [0, 1/6)$, then the inequality \eqref{it:LB-Bellman-u-5/6} gives us that
$1 < v_{i+1, n}(y) - v_{i+1, n}(5/6)$ for all $1\leq i \leq n-2$, so that the definition
of $ f^*_{i,n}(y) $ in \eqref{eq:f-FINITE-star} gives the required
lower bound. This completes the proof of  Proposition \ref{pr:f-lower-bound}.
\end{proof}

\subsection*{Proof of the Upper Bound --- The Last Step}
We now have the all the tools that we need to prove that there is a constant $C < 11 - 4\sqrt{2} \sim 5.343$ such that
$$
\E[A^o_n(\pi^*_n)] \leq (2 - \sqrt{2}) n  + C \quad \text{ for all } n\geq 1.
$$
We first note that the bound is trivial for $n=1$ and $n=2$.
For $n\geq 3$ we let $\{f^*_{1,n}, \ldots, f^*_{n,n}\}$ denote the optimal threshold functions determined
by recursive solution of the Bellman equation \eqref{eq:Bellman-FINITE-flipped} for the finite horizon problem with sample size $n$.
We will use the first $n-2$ of these functions
to construct a suboptimal selection policy for the geometric sample size problem. From the suboptimality of this policy
we will obtain an inequality that will lead to our upper bound.

\subsection*{Construction of a Suboptimal Policy for the Infinite Horizon Problem}

We now consider the infinite horizon problem, and, as before,
we let $\{X_1,X_2, \ldots\}$ denote the sequence of observations.
Here is our selection process:

\begin{itemize}
\item
We let $T_0$ denote the index of the first observation in the sequence that falls in the interval $[5/6,1]$. We select that observation
as first element of our subsequence and we set $Y_{T_0}=1-X_{T_0}$.
We note that  $Y_{T_0}$ has the uniform distribution in  $[0, 1/6]$.
\item
Next we use the functions $\{f^*_{1,n}, \ldots, f^*_{n-2,n}\}$ to decide which of the next $n-2$ observations are to be selected.
Specifically, we make our
$i$'th selection in the series if  $X_{T_0 + i}\geq f^*_{i,n}(Y_{T_0 + i - 1})$ where as usual the $Y_{T_0 + i}$ are defined by the recursion
\begin{equation*}
Y_{T_0 + i} =
\begin{cases}
    Y_{T_0 + i - 1} & \text{if $X_{T_0 +i}< f^*_{i,n}(Y_{T_0 + i - 1})$}\\
    1 - X_{T_0 + i} & \text{if $X_{T_0 +i}\geq f^*_{i,n}(Y_{T_0 + i - 1})$}.
\end{cases}
\end{equation*}
Here one should recall that by Proposition \ref{pr:f-lower-bound} we have
$1/6 \leq f^*_{i,n}(Y_{T_0 + i - 1})$ for $1 \leq i \leq n-2$, so we have
$0 \leq Y_{T_0 + i} \leq 5/6$ for  $1 \leq i \leq n-2$.

\item We will now show how our selection process can be repeated in a stationary way. For $k=0, 1, 2, \ldots$ we proceed as follows:
\begin{enumerate}[1.]
    \item If $Y_{T_k+n-2} \in (1/6, \, 5/6 ]$, then we let $$\tau_k = \inf\{i \geq 1: X_{T_k + n - 2 + i} \geq 5/6\},$$
          and we select the observation $ X_{T_k + n - 2 + \tau_k}$. We note that the random variable
          $ Y_{T_k + n - 2 + \tau_k} = 1 - X_{T_k + n - 2 + \tau_k} $
          is uniformly distributed on $[0, 1/6]$.
    \item If $Y_{T_k + n - 2} \leq 1/6$ , then we simply let $\tau_k = 0$, and we again note that
    $Y_{T_k + n - 2+ \tau_k}$ is uniformly distributed on $[0, 1/6]$.
    \item We set $T_{k+1} = T_k + n - 2 +\tau_k $ and set $k = k + 1$.
    \item Just as in the second bullet, we use the functions $\{f^*_{1,n}, \ldots, f^*_{n-2,n}\}$ to decide
          which observations to select from $\{X_{T_k + 1},X_{T_k+2}, ..., X_{T_k + n - 2}\}$.
          At time $T_k + n - 2$ we are left with some value $Y_{T_k + n - 2}$,
          and we return to Step 1 of this bullet.
\end{enumerate}
\end{itemize}

\subsection*{Analysis of the Policy}

The suboptimal policy we constructed provides us with an increasing
sequence of stopping times $0 < T_0 < T_1 < T_2 < \cdots$ such that
the times $\{T_k: k \geq 1\}$ are regeneration times for the process
$\{Y_i: i\geq T_0\}$.
Moreover, we also have an i.i.d. sequence of stopping times $\{\tau_k: k\geq 1\}$
with distribution
$$
\tau_k \stackrel{d}{=}
\begin{cases}
     0            & \text { if } Y_{ T_0+n-2 } \leq 1/6 \\
     \inf\{i \geq 1: X_{i} > 5/6\} & \text { if } Y_{T_0+n-2} > 1/6.
\end{cases}
$$
These regeneration times $\{T_k: k\geq 1\}$ can be written as function
of the stopping times $\{\tau_k: k\geq 1\}$; specifically, we have
\begin{equation}\label{eq:Tk}
T_k = T_0 + (n-2) k + \sum_{j=1}^{k} \tau_j.
\end{equation}

For any pair $(T_k, Y_{T_k})$, $1\leq k < \infty$, the number $ r(T_k, Y_{T_k})$ of selections made from
$\{X_{T_k+1}, \ldots, X_{T_k+n-2}\}$ is then given by the sum
$$
 r(T_k, Y_{T_k}) \stackrel{\rm def}{=} \sum_{i=1}^{n-2} \1(X_{T_k + i} \geq f^*_{i,n}(Y_{T_k + i - 1})).
$$

For each $0< \rho < 1$, the selection process described gives us a feasible policy that lower bounds
the expected length -- $\E[A_N^o( \pi^*)]$ -- of the alternating subsequence selected by an optimal policy form a sample of
geometric size.

Moreover, if for discounting purposes we view the number of selections $r(T_k, Y_{T_k})$ as being counted all
at time $T_k + n - 2$, then we obtain a lower bound  for the expected value achieved by
our suboptimal policy. We therefore have the bound
\begin{equation} \label{eq:suboptimality}
\E\left[\sum_{k=0}^\infty \rho^{T_k + n - 2}  r(T_k, Y_{T_k}) \right]
\leq \E[A_N^o( \pi^*)].
\end{equation}

We now note that $T_0$ and $Y_{T_0}$ are independent, and we also note that
for each $k\geq 1$, the post-$T_k$ process  $\{Y_{T_k + i}: i\geq 0\}$ is independent of $T_k$. Consequently, we have the factorization
\begin{equation}\label{eq:independence}
\E\left[\rho^{T_k + n - 2} r(T_k, Y_{T_k}) \right]
= \E\left[\rho^{T_k + n - 2}\right] \E \left[ r(T_k, Y_{T_k}) \right]
\quad \text{ for all } k\geq 0,
\end{equation}
and since $T_k$ is a regeneration epoch we also have
$$
\E \left[ r(T_k, Y_{T_k}) \right] = \E \left[ r(T_0, Y_{T_0}) \right]
\quad \text{ for all } k\geq 0.
$$
For $Y_{T_0} = y \in [0, 1/6]$ we recall the identity \eqref{eq:v1n} and we observe that
$$
v_{1,n} (y) - 2 \leq \E[r({T_0}, Y_{T_0}) | Y_{T_0} = y]
$$
since the policy of the right-hand side agrees with the policy of the left-hand side for the first $n-2$ observations,
and the policy of the right-hand side never selects the last two.

The monotonicity of $v_{1,n} (\cdot)$ and the inequality
\eqref{it:UB-Bellman-u-1/6} of Proposition \ref{pr:f-lower-bound}
then give us the lower bound
\begin{equation*}
\E[A^o_n(\pi^*_n)] - 3 = v_{1,n}( 0 ) - 3 \leq \E[r({T_0}, Y_{T_0}) | Y_{T_0} = y] \quad \text{ for all } 0 \leq y \leq 1/6,
\end{equation*}
so by recalling that $0\leq Y_{T_0} \leq 1/6$ and taking total expectations
we see that
$$
\E[A^o_n(\pi^*_n)] - 3 \leq \E[ r({T_0}, Y_{T_0}) ].
$$
The factorization \eqref{eq:independence} then gives us the bound
$$
\E\left[\rho^{T_k + n - 2}\right] \left( \E[A^o_n(\pi^*_n)] - 3 \right)
\leq \E\left[\rho^{T_k + n - 2} r(T_k, Y_{T_k}) \right]
\quad \text{ for all } k \geq 0.
$$
If we now sum over $k$, use the representation \eqref{eq:Tk} and use the suboptimality condition \eqref{eq:suboptimality},
then we have
\begin{equation}\label{eq:lower-bound-finite-optimal}
\left( \E[ A^o_n(\pi^*_n)] - 3 \right) \E\left[ \sum_{k=0}^\infty \rho^{T_0 + (n - 2) (k + 1) + \sum_{j=1}^{k}\tau_j} \right]
\leq \E[ A^o_N(\pi^*)].
\end{equation}

We now note that $T_0$ is also independent from the random variables $\{\tau_k: k\geq 1\}$,
and we recall that the $\tau_k$'s are i.i.d., so
$$
\E\left[ \sum_{k=0}^\infty \rho^{T_0 + (n - 2) (k + 1) + \sum_{j=1}^{k}\tau_j} \right]
= \E \left[ \rho^{T_0} \right] \sum_{k=0}^\infty \rho^{(n - 2)(k+1) } \, \E \left[ \rho^{\tau_1} \right]^{k}.
$$
Since $x \mapsto \rho^x$ is convex,
Jensen's inequality tells us that $\rho^{\E T_0} \leq \E[\rho^{T_0}]$ and that
$\rho^{\E\tau_1} \leq \E[\rho^{\tau_1}]$, so we have
\begin{equation*}\label{eq:Jensen}
\rho^{\E T_0 + n - 2}\sum_{k=0}^\infty  \left(\rho^{n - 2 + \E\tau_1} \right)^{k}
\leq
\E[\rho^{T_0}]  \sum_{k=0}^\infty \rho^{(n - 2)(k+1) } \, \E[\rho^{\tau_1}]^{k}.
\end{equation*}
The left-hand side is an easy geometric series, and
by substitution in equation \eqref{eq:lower-bound-finite-optimal}
we obtain the crucial bound
$$
\E[A^o_n(\pi^*_n)]
\leq
 3 + \frac{1 - \rho^{n - 2 + \E\tau_1}}{\rho^{\E T_0 + n - 2 }} \, \E[A^o_N(\pi^*)].
$$
From the explicit formula for $\E[A^o_N(\pi^*)]$ in Theorem \ref{th2:geometric-mean}
we then have
$$
\E[A^o_n(\pi^*_n)]
\leq
3 + \frac{(1 - \rho^{n - 2+ \E\tau_1})(3 - 2\sqrt{2} - \rho + \rho\sqrt{2})}{\rho^{\E T_0 + n - 1 } (1 - \rho)}.
$$
The bound above holds for all $0< \rho < 1$, so by letting
$\rho \uparrow 1$ we obtain
$$
\E[A^o_n(\pi^*_n)]  \leq 3 + (2 - \sqrt{2}) (n - 2 + \E\tau_1) < (2 - \sqrt{2}) n + 11 - 4\sqrt{2}
$$
since $\E[\tau_1] <  6$. This completes the proof of the upper bound.

\section{Observations on Methods and  Connections}

Our principal goal has been to provide a reasonably definitive solution of a concrete problem of sequential optimization. Still,
the natural expectation is that the solution of such a problem should
also offer some novel methodological perspective. Here we began by exploiting the well-known idea of passing to the
infinite horizon problem, but less commonly (and somewhat doggedly) we made the
trek back from the infinite horizon problem to the finite horizon problem. In retrospect, that trek had elements of
inevitability to it, but it also had surprises.

In a natural and easy way the policy for the infinite horizon problem gave us a lower bound for the finite horizon problem, but our first
surprise was the discovery (at first numerically) that the lower bound was so close to optimal.
There was also something natural about the upper bound for the finite horizon problem, though at first we argued it by contradiction. The idea was
that if we had a policy for finite horizon that was ``too good" then one should be able to concatenate that policy to give a
policy for the infinite horizon problem that would do better than our known optimal policy. The resulting
contradiction would then provide an upper bound.

This three-step process would seem to be applicable to many problems of optimal selection, though, from the details of our proof, it is clear that
special features must be exploited. For example, without obtaining four relations in Lemma \ref{lm:4conditions}, we would not have
been able to solve the infinite horizon problem.
Three of these relations were straightforward, but the critical fourth relation still seems ``lucky." We are also fortunate that symmetry
relations simplified our Bellman equations. These simplifications have an intuitive basis from the alternating nature of the problem, but
it seems fortuitous that these relations could be made rigorous by inductions (of several kinds) on the Bellman equation.

There are many problems where one would like to go from the infinite horizon problem to the finite horizon problem, but one especially attractive
is that of the optimal on-line selection of a monotone subsequence from a sample of independent observations. Here one knows the
asymptotic behavior of the means for both finite samples \cite{SamSte:AP1981} and random samples --- including geometric sized samples ---
(\citename{Gne:JAP1999} \citeyear*{Gne:JAP1999,Gne:IMS2000}). Most notably, in the infinite horizon case
one has a precise understanding of the variance and even a central limit theorem
(\citename{BruDel:SPA2001} \citeyear*{BruDel:SPA2001,BruDel:SPA2004}).
It would be quite interesting to know if such an analogous CLT can be obtained under the finite horizon
formulation.

\bibliographystyle{agsm}

\end{document}